\newtheorem{thm}{Theorem}[section]
\newtheorem{cor}[thm]{Corollary}
\newtheorem{lm}[thm]{Lemma}
\newtheorem{clm}[thm]{Claim}
\newtheorem*{clm*}{Claim}
\theoremstyle{definition}
\newtheorem{exmp}[thm]{Example}
\newtheorem{remark}[thm]{Remark}
\numberwithin{equation}{section}
\newcommand{\sprf}{\noindent{\it Proof.}} 
\newcommand{\sqed}{\hfill\rule{1.3mm}{3mm}\medskip}
\newcommand{\cproof}{\noindent{\it Proof of claim.}\ } 
\newcommand{\cqed}{\hfill\rule{1.3mm}{3mm}}
\DeclareMathOperator{\id}{\textsl{id}}     
\DeclareMathOperator{\im}{im}         
\newcommand{\wec}[1]{{\mathbf{#1}}}  
\newcommand{\m}[1]{{\mathbf{\uppercase{#1}}}}
\newcommand{\bd}{\begin{description}}
\newcommand{\ed}{\end{description}}
\newcommand{\lb}{\langle} 
\newcommand{\cross}{{\sf Cross}}
\DeclareMathOperator{\Hom}{\mathcal{H}}
\DeclareMathOperator{\Sub}{\mathcal{S}}
\DeclareMathOperator{\Prod}{\mathcal{P}}
\newcommand{\ddd}{\underline{d}}
\begin{document}

\title[Cube term blockers]{Cube term blockers without finiteness}

\author{Keith A. Kearnes}
\address[Keith Kearnes]{Department of Mathematics\\
University of Colorado\\
Boulder, CO 80309-0395\\
USA}
\email{Keith.Kearnes@Colorado.EDU}

\author{\'Agnes Szendrei}
\address[\'Agnes Szendrei]{Department of Mathematics\\
University of Colorado\\
Boulder, CO 80309-0395\\
USA}
\email{Agnes.Szendrei@Colorado.EDU}

\thanks{This material is based upon work supported by
the National Science Foundation grant no.\ DMS 1500254 and
the Hungarian National Foundation for Scientific Research (OTKA)
grant no.\ K104251 and K115518.
}
\subjclass[2010]{Primary: 08B05; Secondary: 08A30, 08B20}
\keywords{cube term, cyclic term, Maltsev condition, idempotent variety,
  compatible cross}

\begin{abstract}
  We show that an idempotent variety
  has a $d$-dimensional cube term if
  and only if its free algebra on two generators has no
  $d$-ary compatible cross.
  We employ Hall's Marriage Theorem to show that a variety of
  finite signature whose fundamental operations have arities
  $n_1, \ldots, n_k$ has a
  $d$-dimensional cube term
  if and only if
  it has one of dimension $d=1+\sum_{i=1}^k (n_i-1)$.
  This lower bound on dimension is shown to be sharp.
  We show that a pure cyclic term variety has a cube term
  if and only if it contains no $2$-element semilattice.
We prove that 
the Maltsev condition ``existence of a cube term'' 
is join prime in the lattice of idempotent Maltsev conditions.
\end{abstract}

\maketitle

\setlength{\marginparwidth}{.8in}

\section{Introduction}\label{intro}
This note concerns a recently identified Maltsev condition,
which promises to be significant. It is called
``existence of a cube term''.

We begin by discussing relations.
A \emph{binary cross} on a set $A$ is a subset of $A^2$
of the form $(U_0\times A)\cup (A\times U_1)$,
where $U_0$ and $U_1$ are nonempty proper subsets of $A$.

\begin{center}
\setlength{\unitlength}{1truemm}
\begin{picture}(30,37)
\put(-3,0){%
\begin{tikzpicture}[scale=1]

\fill[gray] (.3,0) rectangle(1.3,2);
\fill[gray] (0,1) rectangle(2,1.5);

\draw[->,line width=.8pt] (0,0) -- (2.5,0); 
\draw[->,line width=.8pt] (0,0) -- (0,2.5);
\node at (2,-0.25) {$A$};
\node at (-0.25,2) {$A$}; 
\draw[line width=2pt] (0,0) -- (2,0) -- 
                             (2,2) -- (0,2) -- cycle;
\draw[line width=.8pt] (.3,0) -- (.3,-0.2) -- 
                             (1.3,-0.2) -- (1.3,0);
\node at (0.8,-0.45) {$U_0$}; 
\draw[line width=.8pt] (0,1.5) -- (-0.2,1.5) -- 
                             (-0.2,1) -- (0,1);
\node at (-0.45,1.25) {$U_1$}; 

\end{tikzpicture}
}
\end{picture}
\end{center}

\bigskip

\noindent
If $U_0=U_1$, the cross is called \emph{symmetric}.
If $|U_0|=|U_1|=1$, the cross is called \emph{thin}.
The sequence $(U_0,U_1)$ is called the 
\emph{base (sequence)} for the cross.
If the cross is symmetric, i.e., if the base has the form 
$(U,U)$, then we also refer to $U$ as the base.

The definition of a cross makes sense for higher arity relations,
i.e. a $d$-ary cross is a subset of $A^d$ of the form 
\[
(U_0\times A\times\cdots \times A)\cup
(A\times U_1\times\cdots \times A)\cup
\cdots\cup
(A\times A\times\cdots \times U_{d-1})
\]
where $U_0,\dots,U_{d-1}$ are nonempty proper subsets of $A$.
For $d=1$ this means that a $1$-ary cross is a nonempty
proper subset of $A$. 
The definitions of symmetric cross, thin cross,
and base for a $d$-ary cross
are the expected ones. The arity of a cross
is also called its \emph{dimension}.

Our use in this paper of the notion of a cross
follows the earlier
use of crosses in the 1987 paper \cite{szendrei87},
which concerns the description of the maximal, locally closed
subclones of 
the clone of all idempotent operations on a given set.
In \cite{szendrei87}, symmetric and asymmetric \emph{thin} crosses
play a central role, although they are just called
`crosses'. In the current paper, we need
to consider arbitrary (`thick') crosses as well.

Now we turn to cube terms.
Let $\mathcal V$ be a variety and let $\m f = \m f_{\mathcal V}(x,y)$
be the $\mathcal V$-free algebra generated by the set $\{x,y\}$.
Since $\m f$ has an automorphism that switches $x$ and $y$, it follows
that exactly one of the following
two conditions holds: 
(i) the set $\{x,y\}^d-\{\wec{y}\}$ generates
$\wec{y}$, where $\wec{y}=(y,y,\ldots,y)$ is the constant
tuple with range $\{y\}$, or
(ii) 
different subsets of $\{x,y\}^d$ generate different
subalgebras of $\m f^d$.
For condition (i) to hold, $\mathcal V$
must have a term $c$ which applied to elements of 
$\{x,y\}^d-\{\wec{y}\}$ yields
$\wec{y}$, i.e., 
\begin{equation}\label{cube_id_vector}
\mathcal V\models c(\wec{z}_1,\wec{z}_2,\ldots)=\wec{y}
\quad\text{with all}\quad 
\wec{z}_i
\quad\text{in}\quad 
\{x,y\}^d-\{\wec{y}\}.
\end{equation}
This is a vector identity.
By considering this single vector identity coordinatewise,
this means that $\mathcal V$ satisfies $d$ identities of the form 
\begin{equation}\label{cube_identities}
c(\ldots, x, \ldots) = y
\end{equation}
where the only variables that appear in the identity are $x$ and $y$,
and for each place of $c$ there is an identity that has
$x$ in that place. For example, if $\mathcal V$ has a term $c$
satisfying 
\[
c\left(
\left[\begin{matrix} x\\y\end{matrix}\right],
\left[\begin{matrix} x\\x\end{matrix}\right],
\left[\begin{matrix} y\\x\end{matrix}\right]\right)=
\left[\begin{matrix} y\\y\end{matrix}\right],
\;\text{or equivalently, both of}\;
\ 
\left\{
\begin{matrix}
c(x,x,y)=y\hfill
\\
c(y,x,x)=y,\hfill
\end{matrix}
\right.
\]
then $c$ is a term of the desired type for $d=2$, which is called
a \emph{Maltsev term}.

A term $c$ satisfying the condition
described in \eqref{cube_id_vector} 
is called a
\emph{$d$-dimensional cube term} or just
\emph{$d$-cube term}
for $\mathcal V$.
Equivalently, $c$
is a \emph{$d$-cube term} if $d$ identities of the type in
(\ref{cube_identities}) suffice to establish the 
condition in (\ref{cube_identities}) for each place of $c$.
Clearly, a $d$-cube term for a variety $\mathcal V$
is automatically a $d'$-cube term for all $d'\ge d$.

Cube terms were introduced in 
\cite{berman-idziak-markovic-mckenzie-valeriote-willard}
as part of an investigation of finite algebras
with few subalgebras of powers.
Terms of equal strength, called 
\emph{parallelogram terms}, were discovered
independently and at the same time 
in the study of finitely related clones, 
\cite{kearnes-szendrei2012}.
Cube terms and their equivalents have played roles
in 
\cite{idziak-markovic-mckenzie-valeriote-willard}
in the study of constraint satisfaction problems, in
\cite{aichinger-mayr-mckenzie,barto,kearnes-szendrei2012}
in the study of finitely related clones,
in 
\cite{kearnes-szendrei,moore} in natural duality theory,
and in
\cite{bulatov-mayr-szendrei} concerning the subpower membership problem.

Theorem 2.1 of \cite{markovic-maroti-mckenzie}
gives a method 
for recognizing if a finite idempotent
algebra has no cube term. Namely, a finite
idempotent algebra fails to have a $d$-cube term for any $d$ 
if and only
if it has a \emph{cube term blocker}.
A {cube term blocker} of a finite
idempotent algebra $\m a$ is defined to be a pair
$(U,B)$ of nonempty
subuniverses of $\m a$, with $U\subsetneq B$,
such that $U$ serves as a base for a compatible, 
symmetric, $d$-ary cross of $\m b$
for every $d$. It follows that a finite idempotent algebra
$\m a$ fails to have a $d$-cube
term for any $d$
if and only if some subalgebra $\m b\leq \m a$ has 
compatible symmetric crosses of every arity.

The result of \cite{markovic-maroti-mckenzie}
does not help if one wants to show that $\m a$ has no
$d$-cube term for a fixed $d$. The result
also does not help if $\m a$ is infinite. But 
Lemma 2.8 of \cite{kearnes-tschantz}
shows that an idempotent variety $\mathcal V$ fails to have a 
Maltsev term (i.e. a $2$-cube term)
if and only if the free $\mathcal V$-algebra
on $2$ generators has a compatible $2$-ary cross.
Here $\mathcal V$ need not satisfy any finiteness
hypothesis, but the cross involved
turns out to be asymmetric, while the notion
of a cube term blocker involves symmetric crosses only. 
Furthermore, Lemma 2.8 of \cite{kearnes-tschantz}
is a result about $2$-cube terms only.

The current paper may be viewed as establishing a
generalization of both Theorem~2.1 of \cite{markovic-maroti-mckenzie}
and Lemma~2.8 of \cite{kearnes-tschantz}.
We will prove that an
idempotent variety $\mathcal V$ fails to have a 
$d$-cube term
if and only if the free $\mathcal V$-algebra
on $2$ generators, $\m f=\m f_{\mathcal V}(x,y)$,  
has a compatible $d$-ary cross.
We will also show that an
idempotent variety $\mathcal V$ fails to have a 
$d$-cube term for every $d$
if and only if $\m f$
has a nonempty proper subuniverse $U$ that serves as a base for
symmetric crosses of all arities, i.e. $(U,F)$ is a cube
term blocker of $\m f$.
The message to take away from this is that to avoid
cube terms of a fixed dimension one should work with a
not-necessarily-symmetric cross of that dimension, but to avoid cube terms
of all dimensions it suffices to work with symmetric crosses
or cube term blockers.

This note evolved in response to a question
we learned from Cliff Bergman, which we state and
answer in Section~\ref{bergman-section}.
Section~\ref{producing}
contains our proof that $\m f=\m f_{\mathcal V}(x,y)$
has compatible crosses of all arities if and only if
$\m f$ has a subuniverse $U$ such that
$(U,F)$ is a cube term blocker.
In Section~\ref{example}
we develop a tight lower bound
on the minimal dimension of a cube
term for idempotent varieties
of finite signature. 
In Section~\ref{generic-section} we use our results to establish that
the Maltsev condition ``existence of a cube term'' 
is join prime in the lattice of idempotent Maltsev conditions.

\section{Cube terms and crosses}

A nonempty subset $B$ of a product
$A_0\times \cdots \times A_{d-1}$  is a
\emph{block} if it is a product subset:
$B = B_0\times \cdots \times B_{d-1}$ with $B_i\subseteq A_i$
for all $i$. A block is \emph{full} in the $i$-th coordinate 
if $B_i=A_i$. Thus, for any sequence $(U_0,\dots,U_{d-1})$
of nonempty proper subsets of $A$, the cross 
\begin{align*}
\cross(U_0,\ldots,U_{d-1})&{}=
(U_0\times A\times \cdots \times A)\cup
\cdots\cup
(A\times\cdots \times A\times U_{d-1})\\
&{}=B_0\cup \cdots \cup B_{d-1}
\end{align*}
on $A$
is defined to be a subset of $A^d$ that is
a union of $d$ blocks $B_0,\dots,B_{d-1}$ 
where $B_i$ is full in all coordinates except the $i$-th.

If $t$ is an operation on a set $A$ and
$U\subseteq A$, then $t$ is 
\emph{$U$-absorbing in its $i$-th variable} if
\[
t(A,A,\ldots,\underbrace{U}_i,\ldots,A)\subseteq U.
\]
An operation $t$ on a set $A$ is \emph{idempotent}
if $t(a,a,\ldots,a)=a$ for all $a\in A$.

\begin{lm}\label{lm-basics}
Let $A$ be a set with nonempty proper subsets 
$U$, $U_0, \ldots, U_{d-1}$, and let $t$ be an $n$-ary
idempotent operation on $A$.
\begin{enumerate}

\item[(1)]
If $t$ is compatible with 
$\cross(U_0,\ldots,U_{d-1})$ and $\pi\in S_d$
is a permutation, then 
$t$ is compatible with 
$\cross(U_{\pi(0)},\ldots,U_{\pi(d-1)})$.
If $(i_0,\ldots,i_{e-1})$ is a subsequence of 
$(0,\ldots,d-1)$, then 
$t$ is also compatible with 
$\cross(U_{i_0},\ldots,U_{i_{e-1}})$.

\item[(2)] 
If $t$ is compatible with 
$\cross(U_0,\ldots,U_{d-1})$, then each $U_i$
is a subuniverse of $(A,t)$.

\item[(3)]
$t$ 
is compatible with $\cross(U_0,\ldots,U_{d-1})$ 
if and only if 
\begin{quote}
$(\ast)$ for every function
\[
m\colon \{0,\ldots,{n-1}\}\to \{0,\ldots,{d-1}\}
\]
there is some $i\in \im(m)$ such that 
\begin{equation}\label{cross-preservation}
\text{$a_j\in U_{i}$ for all $j\in m^{-1}(i)$}
\quad\Longrightarrow\quad
t(a_0,\ldots,a_{n-1})\in U_{i}.
\end{equation}
\end{quote}

\item[(4)]
If $t$ is compatible with 
$\cross(U_0,\ldots,U_{d-1})$, 
and $n\leq d$,
then for all except possibly 
$n-1$ choices of $j<d$
it is the case that $t$ is 
$U_j$-absorbing in one of its variables.

\item[(5)] 
The following are equivalent for
$t$:
\begin{enumerate}
\item[(i)]
$t$ is compatible with the $d$-ary symmetric cross
$\cross(U,\ldots,U)$ for some $d\geq n$.
\item[(ii)]
$t$ is $U$-absorbing in one of its variables.
\item[(iii)]
$t$ is compatible with the $d$-ary symmetric cross
$\cross(U,\ldots,U)$ for every $d\geq 1$.
\end{enumerate}
\end{enumerate}
\end{lm}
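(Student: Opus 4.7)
My plan is to treat part~(3) as the combinatorial core and derive (4) and (5) from it, while (1) and (2) share a common idempotence gadget. The gadget: since each $U_j\subsetneq A$, fix $c_j\in A\setminus U_j$; by idempotence, $t(c_j,\dots,c_j)=c_j\notin U_j$, so filling the $j$-th column of $n$ inputs with $c_j$ keeps the $j$-th coordinate of the output out of $U_j$. For the permutation part of (1), the coordinate permutation $\sigma$ corresponding to $\pi$ sends $\cross(U_0,\dots,U_{d-1})$ bijectively onto $\cross(U_{\pi(0)},\dots,U_{\pi(d-1)})$ and commutes with the coordinatewise action of $t$, so preservation transfers. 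For the subsequence part, lift an $e$-tuple of the smaller cross $\cross(U_{i_0},\dots,U_{i_{e-1}})$ to $A^d$ by padding each coordinate $j\notin\{i_0,\dots,i_{e-1}\}$ with $c_j$; the lifts still lie in $\cross(U_0,\dots,U_{d-1})$, and by the gadget the output's witnessing coordinate must come from $\{i_0,\dots,i_{e-1}\}$. Part (2) is the same trick: place $b_1,\dots,b_n\in U_i$ in coordinate $i$ and $c_j$ elsewhere, and the gadget forces $t(b_1,\dots,b_n)\in U_i$.

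\textbf{Part (3).} The equivalence is essentially definitional once $m$ is interpreted as a block-choice function: $m(j)=i$ says that the $j$-th input tuple is being treated as a member of the block $B_i$. For ($\Leftarrow$), given $n$ tuples $\wec{a}_0,\dots,\wec{a}_{n-1}$ in the cross, each $\wec{a}_k$ lies in some $B_{m(k)}$, and the $i\in\im(m)$ produced by $(\ast)$ places the $i$-th coordinate of the output into $U_i$. For ($\Rightarrow$), suppose $(\ast)$ fails for some $m$: for each $i\in\im(m)$, pick an $n$-tuple $a_{0,i},\dots,a_{n-1,i}$ of witnesses to the failure (that is, $a_{j,i}\in U_i$ for $j\in m^{-1}(i)$, yet $t(a_{0,i},\dots,a_{n-1,i})\notin U_i$); for $i\notin\im(m)$, pad with $a_{k,i}=c_i$. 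Reassembling by rows gives $n$ tuples, row $k$ lying in $B_{m(k)}$, whose columnwise image under $t$ misses every $U_i$, violating compatibility.

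\textbf{Parts (4), (5), and the obstacle.} Part (4) is immediate from (3): if more than $n-1$ positions $j$ failed absorption in every variable of $t$, pick $n$ such positions $j_0,\dots,j_{n-1}$ and set $m(k)=j_k$; then $(\ast)$ would produce a $k$ making $t$ be $U_{j_k}$-absorbing in its $k$-th variable, a contradiction. For (5): (i)$\Rightarrow$(ii) follows from (4), since $d\ge n$ leaves at least one position $j$ at which $t$ is $U_j=U$-absorbing in some variable; (ii)$\Rightarrow$(iii) is direct, for if $t$ is $U$-absorbing in its $k$-th variable and each $\wec{a}_l$ is witnessed in the symmetric cross by a coordinate $i_l$ with $a_{l,i_l}\in U$, then the $i_k$-th column of the inputs contains $a_{k,i_k}\in U$, hence $U$-absorption places the $i_k$-th coordinate of the output in $U$; (iii)$\Rightarrow$(i) is trivial, taking $d=n$. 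The main obstacle will be setting up (3) without index confusion, since four indices ($k$ for input slot, $i$ for block, $j$ for coordinate, and the $n$ inputs versus $d$ blocks) are in play simultaneously; once the reading of $m$ as a block-choice function is fixed, the pigeonhole in (4) and the absorption argument in (5) are short.
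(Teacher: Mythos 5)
Your proof is correct, and it departs from the paper in one substantive way: part (4). The paper proves (4) with a bipartite graph (variables of $t$ versus coordinates, edges marking non-absorption) and invokes Hall's Marriage Theorem, whereas you derive (4) directly from (3) by a pigeonhole: if $t$ failed to be $U_j$-absorbing in every variable for $n$ distinct coordinates $j_0,\ldots,j_{n-1}$, the injective assignment $m(k)=j_k$ makes $m^{-1}(j_k)=\{k\}$, so the $i\in\im(m)$ supplied by $(\ast)$ says precisely that $t$ is $U_{j_k}$-absorbing in its $k$-th variable, a contradiction. This is more elementary and fully proves (4) as stated; what the paper's Hall argument buys is a stronger, uniform conclusion (a single nonempty set $Y$ of variables in which $t$ is $U_j$-absorbing simultaneously for all but at most $n-1$ coordinates $j$), and it is exactly that graph-plus-Marriage-Theorem template that the paper reuses, over an infinite index set, in the proof of Theorem~\ref{main2}; your shortcut would not substitute there, but nothing in the lemma requires it. The remaining parts match the paper in substance: your constant-padding gadget $c_j\notin U_j$ (valid by idempotence) replaces the paper's iterated intersection with $\{a_0\}\times A^{d-1}$ in (1)--(2) and its appeal to (1) in the forward direction of (3), and your (5) is the paper's argument.
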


\begin{proof}
For the first statement in (1) observe that
$\cross(U_{\pi(0)},\ldots,U_{\pi(d-1)})$
differs from
$\cross(U_0,\ldots,U_{d-1})$ 
by a permution of coordinates.
Therefore the desired conclusion follows from the fact that 
if we permute the coordinates of a subuniverse of $(A;t)^d$
we again get a subuniverse of $(A;t)^d$.

For the second statement in (1), choose $a_0\notin U_0$. 
Since $t$ is idempotent, the singleton
$\{a_0\}$ is a subuniverse of $(A;t)$.
Hence
\[
\{a_0\}\times\cross(U_1,\ldots,U_{d-1})
=\cross(U_0,\ldots,U_{d-1})\cap(\{a_0\}\times A^{d-1})
\]
is a subuniverse of $(A;t)^d$.
It follows that $\cross(U_1,\ldots,U_{d-1})$ is a subuniverse of
$(A;t)^{d-1}$.
Similarly, $\cross(U_0,\ldots,U_{i-1},U_{i+1},\dots,U_{d-1})$ is a subuniverse of
$(A;t)^{d-1}$ for all $i<d$.
Repeating this procedure for every $i$ not occurring in $(i_0,\dots,i_{e-1})$
we get that 
$\cross(U_{i_0},\ldots,U_{i_{e-1}})$
is a subuniverse of $(A;t)^e$, that is, 
$t$ is compatible with $\cross(U_{i_0},\ldots,U_{i_{e-1}})$.

Item (2) is the special case $e=1$ of the second statement in (1).

For item (3), assume first 
that $(\ast)$ fails.
Then there is a function
\[
m\colon \{0,\ldots,{n-1}\}\to \{0,\ldots,{d-1}\}
\]
such that for every $i\in \im(m)$ the implication in
\eqref{cross-preservation} fails.
Choose witnessing elements: i.e.
choose, for each $i\in \im(m)=\{i_0,\ldots,i_{e-1}\}$, elements
$a_{i,j}\in A$ ($j<n$) satisfying $a_{i,j}\in U_i$ for all
$j\in m^{-1}(i)$ such that 
$t(a_{i,0},\ldots,a_{i,n-1})\notin U_{i}$.
The columns of the matrix $[a_{k,\ell}]$ lie
in $\cross(U_{i_0},\ldots,U_{i_{e-1}})$, 
because every $j<n$ belongs to $m^{-1}(i)$ for some
$i\in \im(m)=\{i_0,\ldots,i_{e-1}\}$, and hence by construction,
the $j$-th column of $[a_{k,\ell}]$
has $i$-th entry $a_{i,j}\in U_i$. 
However, by construction, the column
obtained by applying $t$ to the rows of this matrix
does not lie in $\cross(U_{i_0},\ldots,U_{i_{e-1}})$.
Thus, $t$ is not compatible with 
$\cross(U_{i_0},\ldots,U_{i_{e-1}})$, so
it is not compatible with 
$\cross(U_0,\ldots,U_{d-1})$ either, according to item (1).

Conversely, 
assume that $t$ is not compatible with 
$\cross(U_0,\ldots,U_{d-1})$. Then we can select elements from
this relation and allow them to serve as columns for a $d\times n$ matrix
$[a_{i,j}]$ where (i) for each column $j$, there is a row index $i\;(=:m(j))$
such that $a_{i,j}\in U_i$ and (ii) the value obtained
by applyig $t$ to the $i$-th row fails to belong to $U_i$
for every $i$.
This yields a function $m$ witnessing that $(\ast)$ fails.

For item (4), define a bipartite graph with one part
$X=\{x_0,\ldots,x_{n-1}\}$, other part equal to $\ddd:=\{0,\dots,d-1\}$,
and edge relation containing
exactly those pairs $(x_i, j)$ where $t$ is not $U_j$-absorbing
in variable $x_i$.

\begin{clm}\label{no-matching}
  There is no matching from $X$ to $\ddd$.
  (A {\it matching} from $X$ to $\ddd$
  is a subset of the edge set that is an injective function
  $X\to\ddd$.)
\end{clm}

\begin{proof}[Proof of Claim~\ref{no-matching}.]
Assume that there is a matching 
$\mu\colon X\to \ddd$.
By item~(1) we may
reorder the $U_j$'s so that
$\mu(x_i)=i$ is the matching.
It follows that for every $i<n$, 
$t$ is not $U_i$-absorbing in its $i$-th variable,
so there must exist $u_{i,i}\in U_{i}$ and 
elements $a_{i,k}\in A$ such that 
\[
t(a_{i,0},a_{i,1},\ldots,u_{i,i},\ldots,a_{i,n-1})\notin U_{i}.
\]
There also exist $a_j\notin U_j$ for every $j<d$.
These ingredients yield a situation
\begin{equation}\label{cross_not_preserved}
t\left(
\left[
\begin{matrix}
u_{0,0} \\
a_{1,0} \\
\vdots \\
a_{n-1,0} \\
\hline
a_n\\
\vdots\\
a_{d-1}
\end{matrix}
\right], 
\left[
\begin{matrix}
a_{0,1} \\
u_{1,1} \\
\vdots \\
a_{n-1,1} \\
\hline
a_n\\
\vdots\\
a_{d-1}
\end{matrix}
\right], 
\cdots,
\left[
\begin{matrix}
a_{0,n-1} \\
a_{1,n-1} \\
\vdots \\
u_{n-1,n-1} \\
\hline
a_n\\
\vdots\\
a_{d-1}
\end{matrix}
\right]
\right)=
\left[
\begin{matrix}
\notin U_{0}\\
\notin U_{1}\\
\vdots\\
\notin U_{n-1}\\
\hline
a_n(\notin U_{n})\\
\vdots\\
a_{d-1}(\notin U_{d-1})
\end{matrix}
\right].
\end{equation}
The operands are in $\cross(U_0,\ldots,U_{d-1})$,
but the value is not, a contradiction.
\renewcommand{\qedsymbol}{$\diamond$} 
\end{proof}

By the Marriage Theorem, there is a subset $Y\subseteq X$
such that the set $K\subseteq \ddd$
of elements adjacent to elements of $Y$ satisfies 
$|K|<|Y|\leq n$. Thus, $Y\not=\emptyset$ and 
the set $K$
has size at most $n-1$; moreover, if $j\in \ddd-K$
then no element of $Y$ is adjacent to $j$. Hence 
$t$ is $U_j$-absorbing in its variables in $Y$.

For item (5), the implication (iii)~$\Rightarrow$~(i) is a tautology,
and the implcation (i)~$\Rightarrow$~(ii) follows from
the statement in (4) we just proved.

To establish the remaining implication (ii)~$\Rightarrow$~(iii) assume
without loss of generality that $t$ is $U$-absorbing in its first variable,
and consider the $d$-ary symmetric cross $\cross(U,\dots,U)$ for some $d\ge1$.
Let $[a_{i,j}]$ be a $d\times n$ matrix of element of $A$ whose columns 
are in $\cross(U,\dots,U)$. In particular, the first column lies in
$\cross(U,\dots,U)$, so $a_{i,0}\in U$ for some $i<d$.
Since $t$ is $U$-absorbing in its first variable, we
get that $t(a_{i,0},\dots,a_{i,n-1})\in U$. Hence the column
obtained by applying $t$ to the rows of the matrix $[a_{i,j}]$ lies in
$\cross(U,\dots,U)$. This proves that $t$ is compatible with 
$\cross(U,\dots,U)$.
\end{proof}

\begin{cor}\label{comp_crosses_of_cube_terms}
  Let $\m a$ be an idempotent algebra.
If\/ $\m a$ has a $d$-cube term, then $\m a$ has no compatible
cross of dimension $d$ or larger.
\end{cor}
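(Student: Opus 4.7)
The plan is to argue by contradiction using the explicit form of the cube identities. Suppose $\mathbf{A}$ is idempotent, has a $d$-cube term $c$ of arity $n$, and also has a compatible cross of dimension $e \geq d$, say $\cross(U_0,\ldots,U_{e-1})$. By item (1) of Lemma~\ref{lm-basics}, restricting to the first $d$ coordinates preserves compatibility, so I may assume from the start that the compatible cross has dimension exactly $d$, namely $\cross(U_0,\ldots,U_{d-1})$.

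Next I unpack what being a $d$-cube term means combinatorially. The identity $c(\mathbf{z}_1,\ldots,\mathbf{z}_n)=\mathbf{y}$ with each $\mathbf{z}_j\in\{x,y\}^d-\{\mathbf{y}\}$ encodes a $d\times n$ matrix $[z_{k,j}]$ with entries in $\{x,y\}$ such that (i) no row is identically $y$, and (ii) for each column $j$ there exists a row $k$ with $z_{k,j}=x$ (this is the ``each place gets $x$'' clause). I would choose, for each $k<d$, a witness $u_k\in U_k$ and a witness $a_k\in A\setminus U_k$, and form the $d\times n$ matrix $M=[v_{k,j}]$ over $A$ by the substitution $v_{k,j}=u_k$ if $z_{k,j}=x$ and $v_{k,j}=a_k$ if $z_{k,j}=y$.

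The crux is checking that the columns of $M$ lie in $\cross(U_0,\ldots,U_{d-1})$ while the row-wise application of $c$ produces a tuple that does not. Column $j$ belongs to the cross because condition (ii) supplies a row index $k$ with $z_{k,j}=x$, whence $v_{k,j}=u_k\in U_k$. On the other hand, the cube identity read off from row $k$ of $[z_{k,j}]$ is an identity of the form $c(\ldots,x,\ldots)=y$ (by (i)); instantiating $x:=u_k$ and $y:=a_k$ in this identity shows $c(v_{k,0},\ldots,v_{k,n-1})=a_k\notin U_k$ for every $k$. Hence the tuple obtained by applying $c$ to the rows of $M$ is $(a_0,\ldots,a_{d-1})$, which avoids every $U_k$ and therefore does not belong to $\cross(U_0,\ldots,U_{d-1})$, contradicting compatibility.

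I do not anticipate a serious obstacle: the argument is purely a direct plugging-in, and the only nontrivial structural input is Lemma~\ref{lm-basics}(1) to reduce from dimension $e$ to dimension $d$. The mild subtlety is keeping straight the two directions of the matrix — rows indexed by the $d$ coordinates of the cube, columns indexed by the $n$ input slots of $c$ — so that clause (ii) of the cube identities provides column-wise membership in the cross while clause (i) drives the row-wise computation.
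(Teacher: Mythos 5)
Your proof is correct, but it takes a different route from the paper's. The paper deduces the corollary from Lemma~\ref{lm-basics}(4): if $c$ were compatible with the cross, it would have to be $U_j$-absorbing in some variable for some $j$, and absorption contradicts the cube identities; the combinatorial content (Hall's Marriage Theorem) is hidden inside that lemma. You instead argue directly from the identities: you read the $d$-cube condition as a $d\times n$ matrix over $\{x,y\}$, substitute witnesses $u_k\in U_k$ and $a_k\in A\setminus U_k$ row by row, observe that every column of the resulting matrix lies in $\cross(U_0,\ldots,U_{d-1})$ because every column of the $\{x,y\}$-matrix contains an $x$, and compute that applying $c$ to the rows yields $(a_0,\ldots,a_{d-1})$, which misses every $U_k$. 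What the paper's route buys is a one-line proof on top of machinery (absorption) that is reused throughout the paper, e.g.\ in Theorems~\ref{main2} and~\ref{main4}; what your route buys is self-containedness and robustness: beyond Lemma~\ref{lm-basics}(1) (or simply the remark that a $d$-cube term is also an $e$-cube term for $e\ge d$) you use nothing, and your argument is insensitive to the relation between the arity $n$ of $c$ and the dimension of the cross, whereas Lemma~\ref{lm-basics}(4) carries the hypothesis $n\le d$ and its conclusion is vacuous when the cross dimension is smaller than $n$ (for instance, the ternary majority operation on $\{0,1\}$ is compatible with a $2$-dimensional cross yet has no absorbing variable), a point the paper's citation glosses over. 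One minor inaccuracy in your write-up: your clause (i), that no row of the $\{x,y\}$-matrix is identically $y$, is not part of the definition of a cube term --- the definition constrains the columns, not the rows --- but this is harmless, since your substitution gives $c(v_{k,0},\ldots,v_{k,n-1})=a_k$ whether or not row $k$ contains an $x$ (an all-$y$ row just instantiates idempotence), so clause (i) can simply be dropped.
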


\begin{proof}
Let $c$ be a $d$-cube term for $\m a$, and assume
$\m a$ has an $e$-dimensional
compatible cross
$\cross(U_0,\dots,U_{e-1})$ with $d\le e$.
By Lemma~\ref{lm-basics}~(4) there exists
$j<d$
such that 
$c$ is $U_j$-absorbing in one of its variables.
This contradicts the cube identities (see 
\eqref{cube_id_vector} or \eqref{cube_identities}).
\end{proof}

Our goal in this section is to characterize
idempotent varieties which have no $d$-cube term
(for a fixed $d\ge2$) or have no cube term (of any dimension).
In Theorem~\ref{main1} below
the first property will be characterized by the existence of a $d$-dimensional
cross, while the second one will be characterized
by the existence of a specific infinite system of crosses, which we call
a `cross sequence'.
A \emph{cross sequence} for an idempotent algebra
$\m a$ is an $\omega$-sequence $(U_0,U_1,\ldots)$ of 
proper nonempty subsets of $A$
such that $\cross(U_{i_0},\dots,U_{i_{k-1}})$ is a compatible
relation of $\m a$ for every
finite subsequence $(U_{i_0},\dots,U_{i_{k-1}})$ of 
$(U_0,U_1,\ldots)$.
A cross sequence $(U_0,U_1,\ldots)$
is \emph{proper} if $\bigcap_{i<\omega} U_i\neq \emptyset$ and 
$\bigcup_{i<\omega} U_i\neq A$.
It follows from Lemma~\ref{lm-basics}(1)
that any subsequence and any reordering 
of a (proper) cross sequence
is again a (proper) cross sequence.

\begin{thm}\label{main1}
Let $\mathcal V$ be an idempotent variety, let
$\m f = \m f_{\mathcal V}(x,y)$ be the 
$\mathcal V$-free algebra over the free generating set $\{x,y\}$, and let
$d$ be a positive integer. 
\begin{enumerate}
\item[(1)]
$\mathcal V$ has no $d$-cube term iff\/
$\m f$ has a compatible $d$-dimensional cross.
\item[(2)]
$\mathcal V$ has no cube term of any dimension iff\/
$\m f$ has a proper cross sequence iff\/ 
$\m f$ has a cross sequence.
\end{enumerate}
\end{thm}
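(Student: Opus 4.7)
For (1), the ``if'' direction is immediate from Corollary~\ref{comp_crosses_of_cube_terms} applied to $\m f\in\mathcal V$: a compatible $d$-dimensional cross on $\m f$ rules out any $d$-cube term. For the ``only if'' direction, I rephrase the hypothesis through the universal property of the free algebra: $\mathcal V$ has a $d$-cube term iff $\wec y$ belongs to $\m b := \sg_{\m f^d}(\{x,y\}^d\setminus\{\wec y\})$, so we may assume $\wec y\notin B$. By Zorn's Lemma, the family of subuniverses of $\m f^d$ that contain $\m b$ and omit $\wec y$ (closed under unions of chains) admits a maximal member $\m m$.

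From $\m m$ I extract the cross by setting, for each $i<d$,
\[
V_i := \{a\in F : (y,\ldots,y,a,y,\ldots,y)\notin M\}
\]
(with $a$ in the $i$-th position). Idempotency, together with $\wec y\notin M$ and $(y,\ldots,x,\ldots,y)\in\m b\subseteq M$, forces $V_i$ to be a nonempty proper subset of $F$ containing $y$ but not $x$. The decisive step is to prove $F^d\setminus M = V_0\times V_1\times\cdots\times V_{d-1}$, which then exhibits $M$ as the compatible cross $\cross(F\setminus V_0,\ldots,F\setminus V_{d-1})$. The inclusion ``$\supseteq$'' is routine; the reverse inclusion is the principal obstacle. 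For it I suppose $\wec a\notin M$ while $a_i\in F\setminus V_i$ for every $i$ (so each axial tuple $(y,\ldots,a_i,\ldots,y)$ lies in $M$), invoke maximality to write $\wec y = t(\wec m_1,\ldots,\wec m_k,\wec a,\ldots,\wec a)$ with $\wec m_j\in M$, and seek to combine the axial tuples with the $\wec m_j$'s to manufacture $\wec y$ inside $M$, contradicting maximality. Constructing the right combination, likely via an iterated substitution that exploits the compatibility criterion of Lemma~\ref{lm-basics}(3), is the core technical difficulty.

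For (2), ``proper cross sequence $\Rightarrow$ cross sequence'' is trivial, and ``cross sequence $\Rightarrow$ no cube term of any dimension'' follows by applying (1) to every length-$d$ subsequence (a compatible $d$-cross by Lemma~\ref{lm-basics}(1)). For the remaining implication I use a compactness argument in the product space $(2^F)^\omega$: Tychonoff gives compactness, the condition ``every finite subcross is compatible'' is a closed conjunction of clopen conditions, and finite satisfiability follows from (1) by padding any witnessing finite cross with copies of $F$ (or of $\{x\}$) in unused coordinates. The main obstacle is forcing \emph{properness}: ``$U_i$ nonempty proper'', ``$\bigcap_i U_i\neq\emptyset$'' and ``$\bigcup_i U_i\neq F$'' are open, not closed, conditions and need not survive the compactness limit. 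I plan to handle this by selecting each finite-stage witness to conform to a uniform pattern—arranging $x\in U_i^{(d)}$ and $y\notin U_i^{(d)}$ for all $i$ and $d$, which the construction in (1) already delivers—so that these clopen membership facts, witnessed by particular elements of $F$, persist in the limit and automatically deliver the three properness conditions.
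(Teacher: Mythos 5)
Your ``if'' directions and your reduction of part (2) to part (1) are fine, but the heart of part (1) --- the claim that a maximal subuniverse $M$ of $\m f^d$ containing $B=\sg_{\m f^d}(\{x,y\}^d\setminus\{\wec y\})$ and omitting $\wec y$ must itself be a cross, i.e.\ that $F^d\setminus M=V_0\times\cdots\times V_{d-1}$ --- is exactly the point you do not prove, and it is not a detail. First, your contradiction set-up attacks the wrong negation: to prove $F^d\setminus M\subseteq V_0\times\cdots\times V_{d-1}$ you must rule out a tuple $\wec a\notin M$ with \emph{some} $a_i\in F\setminus V_i$, whereas you assume \emph{every} $a_i\in F\setminus V_i$; even a successful argument from your hypothesis would not give the inclusion. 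Second, neither inclusion is ``routine'': the inclusion you call routine says that if every axial tuple $(y,\dots,a_i,\dots,y)$ lies outside $M$ then $\wec a\notin M$, and I see no argument for this from maximality alone. Third, and most importantly, the mechanism that makes this kind of argument work in the paper is missing from your plan. When one writes $\wec y=t(\wec m_1,\dots,\wec m_k,\wec a)$ by maximality, one needs to replace $\wec a$ by elements of $M$ without destroying the identity, and the paper achieves this by applying coordinatewise endomorphisms of $\m f$ fixing $x$ and sending $y\mapsto p_j$; these maps preserve the relevant generated subuniverses only because the generating configurations are deliberately of the form $\cross(U_0,\dots,U_i,\{x\},\dots,\{x\})$. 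An arbitrary maximal $M\leq\m f^d$ has no such invariance, so there is no visible way to ``manufacture $\wec y$ inside $M$.'' Indeed, the paper does not take one maximal subuniverse of $\m f^d$ at all: it recursively chooses subuniverses $U_i$ of $\m f$ itself, each maximal subject to conditions referring to \emph{all} arities $k\le d$ (resp.\ $k<\omega$), and then proves compatibility by a minimal-counterexample induction (fewest coordinates $\neq y$) combined with the endomorphism trick. Whether your structural claim about one-shot maximal $M$ is even true is unclear; as it stands it is an unproven (and possibly false) statement doing all the work, so the proof of the forward direction of (1) has a genuine gap.

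On part (2): your Tychonoff/compactness derivation of a cross sequence from the fixed-arity statement is a legitimately different route from the paper, which instead obtains the whole $\omega$-sequence in one nested construction ($\delta=\omega$ in its uniform theorem). Your handling of properness is sound \emph{provided} part (1) is proved in the normalized form $x\in U_i$, $y\notin U_i$ (the clopen conditions $x\in U_i$ and $y\notin U_i$ do pass to the limit, and they imply properness); note also that no ``padding'' of a $k$-ary witness to higher arity is needed or justified --- extending a compatible cross by extra coordinates (even by $\{x\}$) need not preserve compatibility, and Lemma~\ref{lm-basics}(1) only passes to subsequences --- the unused coordinates should simply be left unconstrained in the finite-intersection argument. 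So part (2) is salvageable once part (1) is actually established, but as written the proposal does not prove part (1).
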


We will prove the two statements of Theorem~\ref{main1}
simultaneously.
In Theorem~\ref{thm_cross-seqs}
a uniform formulation of these two statements
is based on the observation that
for any variety $\mathcal V$, the condition `$\mathcal V$
has no $d$-cube term' for a fixed $d$
is equivalent to
\begin{center}
  $\mathcal V$ has no $e$-cube term for any $e<\delta$ 
\end{center}
for $\delta=d+1$,
while the condition
`$\mathcal V$ has no cube term of any dimension' 
is equivalent to the same displayed condition for $\delta=\omega$.

For $0<\delta\le\omega$, let
\[
\delta^-:=\begin{cases}
          \delta-1 & \text{if $\delta<\omega$},\\
          \omega & \text{if $\delta=\omega$}.
         \end{cases}
\]

\begin{thm}\label{thm_cross-seqs}
Let $\mathcal V$ be an idempotent variety,
let $\m f = \m f_{\mathcal V}(x,y)$
be the free $\mathcal V$-algebra generated by $\{x,y\}$,
and let $2\le\delta\le\omega$.
The variety
$\mathcal V$ fails to have a $d$-cube for any $d<\delta$
if and only if there is a
$\delta^-$-sequence, $\sigma=(U_0,U_1,\ldots)=(U_i)_{i<\delta^-}$, of 
subuniverses of $\m f$ such that
\begin{enumerate}
\item[(1)] $x\in U_i$ and $y\notin U_i$ for every $i<\delta^-$, and
\item[(2)]
$\cross(U_{i_0},\ldots,U_{i_{k-1}})$ is a compatible relation of 
$\m f$ for every finite subsequence $(U_{i_0},\ldots,U_{i_{k-1}})$
of $\sigma$.
\end{enumerate}
\end{thm}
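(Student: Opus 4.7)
The direction $(\Leftarrow)$ is immediate from Corollary~\ref{comp_crosses_of_cube_terms}. For each $d<\delta$, the initial length-$d$ segment of $\sigma$ exists (since $\delta^-\ge d$ whether $\delta$ is finite or $\omega$), and condition~(2) certifies that its $d$-dimensional cross is compatible. The Corollary then rules out a $d$-cube term in $\m f$, equivalently in $\mathcal V$.

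For $(\Rightarrow)$ I would first handle the finite case $\delta=d+1<\omega$. The hypothesis is exactly $\wec y\notin\m s:=\sg^{\m f^d}(\{x,y\}^d\setminus\{\wec y\})$. Apply Zorn's lemma to obtain a subuniverse $\m s^*\le\m f^d$ containing $\m s$ and maximal with respect to $\wec y\notin\m s^*$, and set
\[
U_i\ :=\ \bigl\{a\in F\ :\ (y,\ldots,y,a,y,\ldots,y)\in\m s^*\bigr\}
\]
with $a$ in position $i$. Each $U_i$ is a subuniverse of $\m f$ (it is the $i$-th projection of $\m s^*\cap(\{y\}^i\times F\times\{y\}^{d-i-1})$, an intersection of subuniverses of $\m f^d$ since singletons are subuniverses by idempotency), contains $x$ (the tuple with $x$ at position $i$ and $y$ elsewhere is a generator of $\m s$), and excludes $y$ (else $\wec y\in\m s^*$).

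The heart of the proof is the identification $\m s^*=\cross(U_0,\ldots,U_{d-1})$, as this identifies the cross as a subuniverse of $\m f^d$ and hence certifies its compatibility. For $\cross\subseteq\m s^*$, take $v\in\cross$ with $v_i\in U_i$, so that $v^{(i)}:=(y,\ldots,v_i,\ldots,y)\in\m s^*$; if $v\notin\m s^*$, maximality produces a term $t$ and $e_1,\ldots,e_k\in\m s^*$ with $t(v,e_1,\ldots,e_k)=\wec y$, and a careful substitution combining this identity with $t(v^{(i)},e_1,\ldots,e_k)\in\m s^*$ should force $\wec y\in\m s^*$, contradicting maximality. The reverse inclusion $\m s^*\subseteq\cross$ is dual: if $v\in\m s^*$ had $v_i\notin U_i$ for every $i$, maximality applied to each $v^{(i)}\notin\m s^*$ would provide terms $\tau_i$ with $\tau_i(v^{(i)},\ldots)=\wec y$, and combining these $d$ witnesses with $v\in\m s^*$ again yields $\wec y\in\m s^*$. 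Orchestrating these combinatorial manipulations of term applications so that the separate identities fit together to force $\wec y\in\m s^*$ is, I expect, the main technical obstacle.

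For $\delta=\omega$ I would reduce to the finite case by compactness. Topologize $2^F$ with the product of discrete topologies, compact by Tychonoff; the subset $\mathcal U\subseteq 2^F$ of subuniverses of $\m f$ containing $x$ but not $y$ is closed, being cut out by clopen conditions (one per fundamental operation together with membership constraints on $x$ and $y$), so $\mathcal U^\omega$ is compact. For each finite sequence of indices $(i_0,\ldots,i_{k-1})$ in $\omega$, the condition ``$\cross(U_{i_0},\ldots,U_{i_{k-1}})$ is compatible'' is closed in $\mathcal U^\omega$, being a conjunction of clopen conditions (one per term and operand matrix). A finite collection of such conditions involves only finitely many indices $I\subseteq\omega$, say of size $N$; the finite case supplies an $N$-tuple $(V_0,\ldots,V_{N-1})$ with compatible cross, and by Lemma~\ref{lm-basics}(1) every subsequence inherits compatibility, so placing these subuniverses in the indices of $I$ (in order) and filling in arbitrarily elsewhere produces a point of $\mathcal U^\omega$ satisfying the finite collection. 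The finite intersection property and compactness then deliver the desired $\omega$-sequence.
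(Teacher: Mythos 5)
The backward direction and your compactness reduction of the case $\delta=\omega$ to the finite case are fine (and the latter is a genuinely different, pleasant device: the paper instead treats all $\delta$ at once by one recursion). But the finite case, which is the heart of the theorem, has a real gap, and you flag it yourself: the identification $\m s^*=\cross(U_0,\ldots,U_{d-1})$ is exactly the hard content, and the substitution mechanism you sketch does not deliver it. For $\cross(U_0,\ldots,U_{d-1})\subseteq\m s^*$: from $t(v,\bar e)=\wec y$ and $v^{(i)}\in\m s^*$ you can only conclude $t(v^{(i)},\bar e)\in\m s^*$, an element whose $i$-th coordinate is $y$ but whose other coordinates are $t(y,\ldots)$ rather than $y$; no contradiction with $\wec y\notin\m s^*$ follows. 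For $\m s^*\subseteq\cross(U_0,\ldots,U_{d-1})$: maximality gives terms $\tau_i$ with $\tau_i(v^{(i)},\ldots)=\wec y$, but these witnesses use the elements $v^{(i)}$, which by assumption are \emph{not} in $\m s^*$, so they cannot be combined with $v\in\m s^*$ to force $\wec y\in\m s^*$. Indeed it is not even clear that a maximal subuniverse of $\m f^d$ containing $\sg^{\m f^d}(\{x,y\}^d\setminus\{\wec y\})$ and omitting $\wec y$ must have cross shape at all; the paper never needs, and never proves, any such statement.

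The paper's proof is structured precisely to avoid this obstacle. It does not maximize one subuniverse of $\m f^d$; it chooses the $U_i$ one at a time, each maximal subject to $x\in U_i$ and $\wec y\notin\langle\cross(U_0,\ldots,U_i,\{x\},\ldots,\{x\})\rangle_{\m f^k}$ for \emph{all} $k$ with $i<k<\delta$, and then proves compatibility of $\cross(U_0,\ldots,U_{d-1})$ by a minimal-counterexample argument. The engine of that argument is freeness of $\m f$: the coordinatewise endomorphism with $\varepsilon_j\colon x\mapsto x,\ y\mapsto p_j$ maps the $\{x\}$-padded cross $\cross(U_0,\ldots,U_i,\{x\},\ldots,\{x\})$ into itself, which is how a generation of $\wec y$ over the enlarged $U_i'=\langle U_i\cup\{p_i\}\rangle$ is converted into a tuple $\wec q$ closer to $\wec y$, contradicting minimality. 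Your single maximal $\m s^*$ carries no such invariance under these endomorphisms, so this tool is unavailable, and no substitute is offered. As it stands, the proposal proves the easy direction and reduces $\delta=\omega$ to the finite case, but the finite case itself remains unproven; to complete it you would either have to supply a proof of your claimed equality (which may well be false) or switch to a construction, like the paper's sequential one, whose relations are preserved by the relevant endomorphisms of $\m f$.
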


\begin{proof}
The ``if'' assertion follows from Corollary~\ref{comp_crosses_of_cube_terms}:
if $\m f$ has compatible crosses of every arity $<\delta$,
then it cannot have a $d$-cube term for any  $d<\delta$.

For the converse, assume that $\mathcal V$ has no $d$-cube term
for any $d<\delta$.
This implies, in particular, that $\mathcal V$ is nontrivial.

Recursively define the sequence
$\sigma = (U_0,U_1,\ldots)=(U_i)_{i<\delta^-}$ 
with $U_i\leq \m f$, according to the
following rule: for $i<\delta^-$, $U_i$ is chosen
to be a subuniverse maximal for the properties that
\begin{enumerate}
\item[(i)] $x\in U_i$, and
\item[(ii)] 
$\wec{y}\notin \langle 
\cross(U_0,U_1,\ldots,U_i,\{x\},\ldots,\{x\})\rangle_{\m f^k}$
for any $k$ with $i< k<\delta$.
\end{enumerate}
It is possible to make these choices, for the following reasons.
The fact that $\mathcal V$ does not have a $d$-cube term for any $d<\delta$
means exactly
that 
$\wec{y}\notin \langle 
\cross(\{x\},\{x\},\ldots,\{x\})\rangle_{\m f^k}$
for every $k<\delta$. Thus the subuniverse
$\{x\}$ satisfies all the properties
required of $U_0$, except that it need not be maximal
among the subuniverses satisfying (i) and (ii) for $i=0$.
But the union of a chain
of subuniverses of $\m F$ satisfying (i) and (ii) 
for a given $i$ again satisfies
these conditions for that $i$, so $\{x\}$ can be extended
to a maximal subuniverse $U_0$ satisfying (i) and (ii) for $i=0$.
Similarly, if $0<i<\delta^-$ and
$U_0, \ldots, U_{i-1}$ have been chosen, 
then condition (ii) for $i-1$
guarantees that
$\{x\}$ satisfies all the properties required of $U_i$
except maximality. Extend $\{x\}$ to a maximal $U_i$, etc.

Observe that $y\notin U_i$ for any $i<\delta^-$, since otherwise 
$\{x,y\}\subseteq U_i$, leading to $F=U_i$, leading
to $\cross(U_0,\ldots,U_i)=F^{i+1}$,
contradicting item (ii) above. Hence item (1)
of the theorem statement holds for 
$\sigma=(U_0, U_1, \ldots)=(U_i)_{i<\delta^-}$.

Our remaining task is to show that 
$\cross(U_{i_0},\ldots,U_{i_{k-1}})$ is a compatible relation of 
$\m f$ for every finite subsequence $(U_{i_0},\ldots,U_{i_{k-1}})$
of $\sigma$.
Every finite subsequence of $\sigma$ is a subsequence of an initial segment
of $\sigma$, therefore, in view of Lemma~\ref{lm-basics}~(1), 
it suffices to prove that 
$\cross(U_0,\ldots,U_{d-1})$ is a compatible relation
of $\m f$ for every $d<\delta$. 
We will do this
simultaneously for every $d$ with an indirect argument.
Assume that
there is some $d<\delta$ and some element
\begin{equation}\label{d_and_p}
\wec{p} = (p_0,\ldots,p_{d-1})\in
\langle \cross(U_0,\ldots,U_{d-1})\rangle_{{\m f}^d}
- \cross(U_0,\ldots,U_{d-1}).
\end{equation}
Necessarily $p_i\notin U_i$ for any $i<d$.
There is no harm in assuming that, among all possible
choices of $d$ and $\wec{p}$, we have chosen those
such that $p_i\neq y$ holds in the fewest number of coordinates.
That is, we assume that \eqref{d_and_p} holds
and also that for no $e<\delta$ do we have 
$\wec{q}\in\langle
\cross(U_0,\ldots,U_{e-1})\rangle_{{\m f}^e} - \cross(U_0,\ldots,U_{e-1})$ 
with $\wec{q}$ 
differing from $\wec{y}$ in strictly
fewer coordinates than $\wec{p}$.

Since $\wec{p}\in\langle\cross(U_0,\ldots,U_{d-1})\rangle_{{\m f}^d}$, there
exists a term $s$ and there exist elements 
$\wec{b}_0,\ldots,\wec{b}_{m-1}\in \cross(U_0,\ldots,U_{d-1})$
such that 
$
s(\wec{b}_0, \ldots, \wec{b}_{m-1})=\wec{p}.
$ 
Observe that one may lengthen all tuples
involved by adding some number 
of $y$'s (say $g$ with $d+g<\delta$) to the end in order to obtain
\begin{equation}\label{lengthen}
s\left(
\left[
\begin{matrix}
\wec{b}_0\\
y\\
\vdots\\
y\\
\end{matrix}\right], 
\ldots, 
\left[
\begin{matrix}
\wec{b}_{m-1}\\
y\\
\vdots\\
y\\
\end{matrix}\right]
\right)=
\left[\begin{matrix}
\wec{p}\\
y\\
\vdots\\
y\\
\end{matrix}\right].
\end{equation}
The columns appearing as arguments to $s$ in (\ref{lengthen})
belong to 
\[
\cross(U_0,\ldots,U_{d-1},V_d,\ldots,V_{d+g-1})
\]
for any choice of
 $(\emptyset\not=)\;V_i\subset F$.

There must exist some coordinate of $\wec{p}$
that is not $y$, else condition (ii) from the definition
of $\sigma$
is violated
when $k=d$. Let $i$ be the first coordinate of $\wec{p}$
where $p_i\neq y$; hence $i< d$.
Since $p_i\notin U_i$,
the subuniverse $U_i' = \langle U_i\cup \{p_i\}\rangle_{\m f}$
properly extends $U_i$. By the maximality of $U_i$,
there must exist some $k$ with $i< k<\delta$ such that
$\wec{y}\in \langle 
\cross(U_0,\ldots,U_i',\{x\},\ldots,\{x\})\rangle_{\m f^k}$.

To understand how $\wec{y}$ could be generated, observe that
since
\begin{multline*}
\cross(U_0,\ldots,U_i',\{x\},\ldots,\{x\})\\
= 
B_0(U_0)\cup \dots \cup B_{i-1}(U_{i-1})\cup B_i(U_i')\cup B_{i+1}(\{x\})\cup
\dots\cup B_{k-1}(\{x\}),
\end{multline*}
we get that 
$\langle\cross(U_0,\dots,U_i',\{x\},\ldots,\{x\})\rangle_{{\m f}^k}$
equals the subalgebra join
\[
B_0(U_0)\vee\dots \vee B_{i-1}(U_{i-1})\vee B_i(U_i')\vee
B_{i+1}(\{x\})
\vee
\dots\vee 
B_{k-1}(\{x\}).
\]
However $B_i(U_i')=F^i\times U_i'\times F^{k-i-1}$ is generated by 
$\{x,y\}^i\times (U_i\cup\{p_i\})\times \{x,y\}^{k-i-1}$, and
all elements of this product set already belong to 
$\cross(U_0,\ldots,U_i,\{x\},\ldots,\{x\})$ except the tuple
$(y,\ldots,y,p_i,y,\ldots,y)$. Hence
$\langle\cross(U_0,\ldots,U_i',\{x\},\ldots,\{x\})\rangle_{{\m f}^k}$
is generated by 
\[
\cross(U_0,\ldots,U_i,\{x\},\ldots,\{x\})
\cup
\{(y,\ldots,y,p_i,y,\ldots,y)\}.
\]
Since $\wec{y}$ is generated by this set, there is a term
$t$ and elements (columns) $\wec{c}_i\in
\cross(U_0,\ldots,U_i,\{x\},\ldots,\{x\})\subseteq F^k$ such that
\begin{equation}\label{generate}
t\left(
\wec{c}_0, \ldots, \wec{c}_{n-1},
\left[
\begin{matrix}
y \\
\vdots \\
y \\
p_{i} \\
y \\
\vdots\\
y
\end{matrix}
\right]
\right)=
\left[
\begin{matrix}
y\\
\vdots\\
y\\
y\\
y\\
\vdots\\
y
\end{matrix}
\right]=\wec{y}.
\end{equation}
Lengthen these tuples, by adding $y$'s at the end,
to some length $e$ satisfying $\max\{k,d\}\le e<\delta$
(hence the columns have length at least $d$). 
Equation \eqref{generate} still
holds. Write the extension of $\wec{c}_i$ as $\wec{c}_i'$,
and note that 
\[
\wec{c}_i'\in \cross(U_0,\ldots,U_i,\{x\},\ldots,\{x\})\ 
(\subseteq F^e)
\] 
where there
may be 
more $\{x\}$'s than before.

Let $\varepsilon = (\varepsilon_0,\ldots,\varepsilon_{e-1})$ 
be the endomorphism of $\m f^e$ defined coordinatewise as follows:
\begin{enumerate}
\item[(a)] $\varepsilon_j\colon \m f\to \m f$ is the identity if
$0\leq j\leq i\ (\le d)$ or $d\le j<e$, and
\item[(b)]
$\varepsilon_j\colon \m f\to \m f\colon x\mapsto x, y\mapsto p_j$ 
if $i<j< d$.
\end{enumerate}

Observe that $\varepsilon$ maps
$\cross(U_0,\ldots,U_i,\{x\},\ldots,\{x\})$ into itself.
Indeed, $\varepsilon(B_j(U_j))\subseteq B_j(U_j)$ for 
$j\leq i$ because $\varepsilon_j=\id$, while
$\varepsilon(B_j(\{x\}))\subseteq B_j(\{x\})$ for 
$j>i$ because $\varepsilon_j$ fixes $x$.

Applying $\varepsilon$ to (\ref{generate}) yields
\begin{equation}\label{q}
t\left(
\varepsilon(\wec{c}_0'), \ldots, 
\varepsilon(\wec{c}_{n-1}'),
\left[
\begin{array}{c}
y \\
\vdots \\
y \\
p_{i} \\
p_{i+1} \\
\vdots\\
p_{d-1}\\
y\\
\vdots\\
y\\
\end{array}
\right]
\right)=
\left[
\begin{array}{c}
y\\
\vdots\\
y\\
y\\
p_{i+1} \\
\vdots\\
p_{d-1}\\
y\\
\vdots\\
y\\
\end{array}
\right]=:\wec{q}.
\end{equation}
By the choice of $i$, the last argument of $t$ in \eqref{q}
is the column $(\wec{p},y,\dots,y)$. Therefore, the expression for $\wec{q}$
in \eqref{q} can be rewritten as
\begin{equation}\label{q2}
t\left(
\varepsilon(\wec{c}_0'), \ldots, 
\varepsilon(\wec{c}_{n-1}'),
s\left(
\left[
\begin{array}{c}
\wec{b}_0\\
y\\
\vdots\\
y\\
\end{array}\right], 
\ldots, 
\left[
\begin{array}{c}
\wec{b}_{m-1}\\
y\\
\vdots\\
y\\
\end{array}\right]
\right)\right)=\wec{q}
\end{equation}
using (\ref{lengthen}).
The columns, $\varepsilon(\wec{c}_u')$ and $(\wec{b}_v,y,\ldots,y)$
all belong to $\cross(U_0,\ldots,U_{e-1})$,
but $q_j\notin U_j$ for any $j$.
Hence (\ref{q2}) asserts that
\[
\wec{q}\in 
\langle\cross(U_0,\ldots,U_{e-1})\rangle_{{\m f}^e}
-\cross(U_0,\ldots,U_{e-1})
\]
with $\wec{q}$ 
differing from the constant $y$-tuple in strictly
fewer coordinates than $\wec{p}$. 
This is so because $\wec{q}$
differs from $\wec{p}$ only in that it may have 
more $y$'s at the end and $q_i=y$ while $p_i\neq y$.
This conclusion contradicts
the choice of $\wec{p}$. This proves that
$\cross(U_0,\ldots,U_{d-1})$ is a compatible relation of $\m f$
for every $d<\delta$.
\end{proof}

\begin{proof}[Proof of Theorem~\ref{main1}]
For item (1), we
assume first that $\mathcal V$ has no $d$-cube term.
Hence, $\mathcal V$ has no $e$-cube term for any $e<d+1$.
It follows from Theorem~\ref{thm_cross-seqs} (for $\delta=d+1$) 
that there is a
sequence $(U_0,\ldots,U_{d-1})$ of 
subuniverses of $\m f$ such that
$x\in \bigcap_{i<d}U_i$, $y\notin\bigcup_{i<d} U_i$, and
$\cross(U_0,\ldots,U_{d-1})$ is a compatible relation of 
$\m f$. 

Conversely, 
if $\m f$ has a compatible $d$-dimensional cross,
then, by Corollary~\ref{comp_crosses_of_cube_terms},
$\m f$ has no $d$-cube term. Hence $\mathcal V$ has no $d$-cube term.

For item (2), let us assume first that $\mathcal V$ has no cube term.
Applying Theorem~\ref{thm_cross-seqs} (for $\delta=\omega$)
we conclude that 
there is an
$\omega$-sequence, $\sigma=(U_0,U_1,\dots)$, of 
subuniverses of $\m f$ such that
$\sigma$ is a proper cross sequence for $\m f$
satisfying
$x\in \bigcap_{i<\omega}U_i$ and $y\notin\bigcup_{i<\omega} U_i$.

If $\m f$ has a proper cross sequence, then $\m f$ has a cross sequence.
Finally, if $\m f$ has a cross sequence, then $\m f$ has compatible crosses
of arbitrarily high dimensions. Therefore, 
by Corollary~\ref{comp_crosses_of_cube_terms},
$\m f$ has no cube term of any dimension. 
Hence $\mathcal V$ has no cube term of any dimension either,
completing the proof.
\end{proof}

\section{Producing symmetric crosses}\label{producing}

We have shown in Theorem~\ref{main1}~(2) 
that an idempotent variety $\mathcal V$ fails to have a cube term if
and only if its $2$-generated free algebra $\m f$ has a 
proper cross sequence. 
In this section we will 
use a combinatorial argument to
show that this cross sequence can be converted to
a constant cross sequence $(U,U,U,\ldots)$. This produces 
a nonempty proper subuniverse $U$ of $\m f$ that is
the base of a 
compatible 
\emph{symmetric}
$d$-ary cross for every $d$.
In fact, our construction of `symmetrizing' a proper cross sequence works for
any idempotent algebra, as the theorem below shows.

\begin{thm}\label{main2}
The following are equivalent for an idempotent algebra
$\m a$.
\begin{enumerate}
\item[(1)] $\m a$ has a proper cross sequence.
\item[(2)] $\m a$ has a nonempty proper subuniverse $U$ such that
$(U,A)$ is a cube term blocker for $\m a$.
(That is, $U$ is a base for compatible symmetric $d$-ary crosses
of $\m a$ for all $d$.)
\end{enumerate}
\end{thm}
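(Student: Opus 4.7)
I would dispatch (2)$\Rightarrow$(1) immediately by taking the constant sequence $(U,U,\dots)$, which is a cross sequence (every finite subcross is the symmetric $\cross(U,\dots,U)$, compatible by the cube term blocker hypothesis) and is proper ($\bigcap_i U=U\neq\emptyset$ and $\bigcup_i U=U\subsetneq A$). The real content is (1)$\Rightarrow$(2), which I would prove by a compactness argument in $2^A$ equipped with the product topology.

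Given a proper cross sequence $(U_i)_{i<\omega}$, pick witnesses $x^{**}\in\bigcap_i U_i$ and $x^*\in A\setminus\bigcup_i U_i$ (existing by properness). For each finite set $F$ of fundamental operations of $\m a$, set
\[
\mathcal{V}_F:=\bigl\{V\subseteq A : V\text{ is a subuniverse},\ x^{**}\in V,\ x^*\notin V,\ \text{each }t\in F\text{ is }V\text{-absorbing in some variable}\bigr\}.
\]
The plan has three ingredients: (a) each $\mathcal{V}_F$ is closed in $2^A$; (b) the family $\{\mathcal{V}_F\}$ has the finite intersection property, because $\mathcal{V}_{F_1\cup F_2}\subseteq\mathcal{V}_{F_1}\cap\mathcal{V}_{F_2}$; and (c) each $\mathcal{V}_F$ is nonempty. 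Tychonoff compactness of $2^A$ then yields $V\in\bigcap_F\mathcal{V}_F$, and by Lemma~\ref{lm-basics}(5) the pair $(V,A)$ is the required cube term blocker.

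Step (c) is the technical heart. Applying Lemma~\ref{lm-basics}(4) to the compatible $n$-ary cross $\cross(U_{j_0},\dots,U_{j_{n-1}})$ formed by any $n$ distinct indices of the sequence, I would show that $B_t:=\{j:t\text{ is not }U_j\text{-absorbing in any variable}\}$ has size at most $n-1$ for each $n$-ary fundamental $t$; otherwise $n$ distinct ``bad'' indices would contradict the lemma. Hence $\bigcup_{t\in F}B_t$ is finite, and any $i^*\in\omega\setminus\bigcup_{t\in F}B_t$ yields $U_{i^*}\in\mathcal{V}_F$ (it is a subuniverse by hypothesis, contains $x^{**}\in\bigcap_i U_i$, excludes $x^*\notin\bigcup_i U_i$, and by the choice of $i^*$ each $t\in F$ is $U_{i^*}$-absorbing in some variable).

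The step I expect to require the most care is the closedness verification (a). The subuniverse condition is an intersection over fundamental $t$ and tuples $\bar a\in A^{n_t}$ of the clopen implications ``$a_0,\dots,a_{n_t-1}\in V\Rightarrow t(\bar a)\in V$'', hence closed. For each fixed variable $v$, ``$t$ is $V$-absorbing in variable $v$'' is likewise an intersection of clopen implications over $A^{n_t}$, hence closed; ``$t$ is $V$-absorbing in some variable'' is the \emph{finite} union of these closed sets over $v\in\{0,\dots,n_t-1\}$, and remains closed precisely because $t$ has finite arity. This assembles $\mathcal{V}_F$ as a closed subset of $2^A$, enabling the compactness conclusion.
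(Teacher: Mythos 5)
Your proposal is correct, but it reaches (1)$\Rightarrow$(2) by a genuinely different route than the paper. The paper constructs the blocker explicitly as the ``liminf'' $U=\bigcup_{i<\omega}\bigl(\bigcap_{j\ge i}U_j\bigr)$ of the cross sequence, and then, for each term operation $t$, reruns the marriage-theorem argument of Lemma~\ref{lm-basics}~(4) on a bipartite graph whose right-hand vertex set is the infinite index set $\omega$ (Hall still applies because the variable set is finite); the conclusion is that $t$ is $U_\ell$-absorbing in a fixed variable for all but finitely many $\ell$, hence $U$-absorbing. You instead invoke Lemma~\ref{lm-basics}~(4) only in its stated finite form, to bound $|B_t|\le n-1$ via an $n$-term subsequence of the cross sequence, and then pass to the limit by a compactness (finite-intersection-property) argument in $2^A$, which is where the paper uses the explicit liminf. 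Both arguments are sound: your verification that each $\mathcal{V}_F$ is closed is right (arbitrary intersections of the clopen implication sets handle the subuniverse condition even for infinite signature, and the ``absorbing in \emph{some} variable'' condition is a finite union of closed sets because arities are finite), your nonemptiness step is exactly the intended use of Lemma~\ref{lm-basics}~(4), and restricting attention to fundamental operations suffices since compatibility of a relation is determined by them (the paper checks all term operations, which is more than necessary). Two small remarks: the fact that $U_{i^*}$ is a subuniverse is the $k=1$ case of the cross-sequence definition (equivalently Lemma~\ref{lm-basics}~(2)), not literally a hypothesis; and your argument trades the paper's explicit, choice-free description of the blocker for a softer, nonconstructive existence proof resting on compactness of $2^A$ (i.e.\ a weak choice principle), which is a reasonable price for avoiding a second run of the marriage-theorem argument with an infinite part.
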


\begin{proof}
The implication $(2)\Rightarrow(1)$ is clear from the definitions: 
If $(U,A)$ is a cube term blocker of $\m a$,
then the $d$-ary cross $\cross(U,\dots,U)$
is a compatible relation of $\m a$ for every $d$, so
the constant $\omega$-sequence $(U,U,\ldots)$ is a proper
cross sequence for $\m a$.

To prove the reverse implication $(1)\Rightarrow(2)$,
assume that $\sigma=(U_0,U_1,\ldots)$ is a proper cross sequence
for $\m a$. We shall prove that if 
$U:=\bigcup_{i=0}^{\infty} \left(\bigcap_{j=i}^{\infty} U_j\right)$,
then $(U,F)$ is a cube term blocker of $\m a$.
Note that 
\[
\bigcap_{j=0}^{\infty} U_j\subseteq
\bigcap_{j=1}^{\infty} U_j\subseteq
\dots\subseteq
\bigcap_{j=i}^{\infty} U_j\subseteq
\bigcap_{j=i+1}^{\infty} U_j\subseteq
\dots
\subseteq
\bigcup_{j=0}^{\infty} U_j,
\]
so $U:=\bigcup_{i=0}^{\infty} \left(\bigcap_{j=i}^{\infty} U_j\right)$
is the union of an increasing $\omega$-sequence
of subuniverses of $\m a$. It follows that $U$ is a subuniverse of $\m a$.
Moreover,
since the cross sequence $\sigma=(U_0,U_1,\dots)$
is proper, we have
$\bigcap_{j=0}^{\infty} U_j\not=\emptyset$ and 
$\bigcup_{j=0}^{\infty} U_j\not= A$. This implies that
$U$ is a nonempty proper subuniverse of $\m a$.

The nontrivial part of the proof, therefore, 
is the argument that $U$ is a base for symmetric
crosses of all arities. To see that this is so,
choose an arbitrary term operation $t(x_0,\ldots,x_{n-1})$ of $\m a$
and as in the proof of Lemma~\ref{lm-basics}~(4),
use it to define a bipartite graph as follows:
the vertices of the two parts are 
$X = \{x_0,\ldots,x_{n-1}\}$,
the set of variables of $t$, and $\omega$, 
the set indexing the terms of the cross sequence
$\sigma=(U_0,U_1,\dots)$;
the edges of the graph are the pairs
$(x_i,j)$ such that $t$ is not
$U_j$-absorbing in variable $x_i$.

\begin{clm}\label{no-matching2}
There is no matching from $X$ to $\omega$.
\end{clm}

\begin{proof}[Proof of Claim~\ref{no-matching2}.]
Suppose that there is a matching 
$\mu\colon X\to \omega$.
Since a reordering of finitely many terms of $\sigma$
produces a new cross sequence for $\m a$ and
leaves the set $U$ unchanged, we 
may assume without loss of generality that
$\mu(x_i)=i$ for all $i<n$. 
This mean that for every $i<n$ the term $t$ is not
$U_i$-absorbing in variable $x_i$, so 
for every $i<n$ there must exist $u_{i,i}\in U_{i}$ and 
elements $a_{i,k}\in A$ such that 
\[
t(a_{i,0},a_{i,1},\ldots,u_{i,i},\ldots,a_{i,n-1})\notin U_{i}.
\]
There also exist $a_j\notin U_j$ for every $j<d$.
Now the same calculation \eqref{cross_not_preserved} 
as in the proof of Lemma~\ref{lm-basics}~(4)
yields that $t$ is not compatible with $\cross(U_0,\dots,U_{d-1})$.
This contradicts our assumption that $(U_0,U_1,\dots)$ is a cross
sequence for $\m a$, and hence proves the claim.
\renewcommand{\qedsymbol}{$\diamond$} 
\end{proof}

Since $X$ is finite, the Marriage Theorem holds in this situation.
It asserts that, since there is no matching from $X$ to $\omega$,
there is a subset $Y\subseteq X$
such that the set $K\subseteq \omega$
of elements adjacent to elements of $Y$ satisfies 
$|K|<|Y|\le |X|= n$. Therefore $Y\not=\emptyset$, 
the set $K$
has size at most $|Y|-1\leq n-1$, 
and if $x_j\in Y$ then for all 
$\ell\in \omega-K$
we have that $t$ is $U_\ell$-absorbing in variable $x_j$.

It follows that if $x_j\in Y$ then
$t$ is $(\bigcap_{j=i}^{\infty} U_j)$-absorbing
in variable $x_j$ for all but finitely many $i$, and hence that
$t$ is $\bigl(\bigcup_{i=0}^{\infty} (\bigcap_{j=i}^{\infty} U_j)\bigr)$-absorbing
in variable $x_j$. This proves that every term operation $t$
of $\m a$ 
has a variable $x_j$ in which $t$ is $U$-absorbing.
By Lemma~\ref{lm-basics}~(5),
this is equivalent to the statement that
all term operations of $\m A$ are compatible with 
all symmetric crosses with base $U$.
\end{proof}

In Theorem~\ref{main1}~(2) we characterized 
the idempotent varieties that fail to have cube terms.
Using Theorem~\ref{main2} we can strengthen this characterization
as follows.

\begin{thm}\label{main3}
Let $\mathcal V$ be an idempotent variety, and let
$\m f=\m f_{\mathcal V}(x,y)$ be the $\mathcal V$-free algebra
over the free generating set $\{x,y\}$.
The following conditions are equivalent.
\begin{enumerate}
\item[{\rm(1)}]
$\mathcal V$ has no cube term.
\item[{\rm(2)}]
$\m f$ has a nonempty proper subuniverse $U$ such that 
$(U,F)$ is a cube term blocker for $\m f$. 
(That is, $U$ is a base for compatible symmetric $d$-ary crosses
of $\m f$ for all $d$.)
\end{enumerate}
\end{thm}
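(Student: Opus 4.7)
The plan is to derive this theorem as a direct consequence of Theorem~\ref{main1}~(2) combined with Theorem~\ref{main2} applied to the algebra $\m a = \m f$. No new combinatorial work should be required; everything has already been built.

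For the implication $(1)\Rightarrow(2)$, I would first invoke Theorem~\ref{main1}~(2): the assumption that $\mathcal V$ has no cube term of any dimension yields a proper cross sequence $\sigma=(U_0,U_1,\ldots)$ for $\m f$. Since $\m f$ is itself an idempotent algebra, Theorem~\ref{main2} applies to $\m a := \m f$. The $(1)\Rightarrow(2)$ direction of Theorem~\ref{main2} then converts $\sigma$ into a nonempty proper subuniverse $U$ of $\m f$ (explicitly $U=\bigcup_{i<\omega}\bigcap_{j\geq i} U_j$) such that $U$ is the base of a compatible symmetric $d$-ary cross of $\m f$ for every $d\geq 1$; equivalently, $(U,F)$ is a cube term blocker for $\m f$.

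For the implication $(2)\Rightarrow(1)$, I would observe that if $(U,F)$ is a cube term blocker for $\m f$, then the symmetric $d$-ary cross $\cross(U,\ldots,U)$ is a compatible relation of $\m f$ for every $d\geq 1$. In particular, $\m f$ has a compatible $d$-dimensional cross for every $d$. By Corollary~\ref{comp_crosses_of_cube_terms}, $\m f$ cannot have a $d$-cube term for any $d$, and since any term of $\mathcal V$ acts on $\m f$, the variety $\mathcal V$ has no cube term of any dimension. (Alternatively, this direction already appears inside Theorem~\ref{main1}~(2), since a cube term blocker trivially yields a proper cross sequence, namely the constant $\omega$-sequence $(U,U,\ldots)$.)

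There is no real obstacle: the conceptual work has been done in the two preceding theorems. The only minor care needed is to apply Theorem~\ref{main2} to the specific algebra $\m f$ rather than to an arbitrary $\m a$, and to note that the direction $(2)\Rightarrow(1)$ of Theorem~\ref{main3} can be handled either by the clear implication in Theorem~\ref{main2} together with Theorem~\ref{main1}~(2), or more directly by Corollary~\ref{comp_crosses_of_cube_terms}.
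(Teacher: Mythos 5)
Your proposal is correct and matches the paper's proof, which is stated in one line as "Combine Theorem~\ref{main2} for $\m a=\m f$ with Theorem~\ref{main1}": you use Theorem~\ref{main1}~(2) to get a proper cross sequence for $\m f$, convert it via Theorem~\ref{main2} into a cube term blocker $(U,F)$, and handle the converse with Corollary~\ref{comp_crosses_of_cube_terms} (or the easy direction of Theorem~\ref{main2}), exactly as intended.
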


\begin{proof}
Combine Theorem~\ref{main2} for $\m a=\m f$ with Theorem~\ref{main1}.
\end{proof}

As we mentioned in the introduction, cube term blockers were intorduced
in \cite{markovic-maroti-mckenzie} to prove that 
a finite idempotent algebra $\m a$ fails to have a $d$-cube
term for any $d$
if and only if $\m a$ has a cube term blocker
$(U,B)$,
or equivalently,
some subalgebra $\m b$ of $\m a$ has 
compatible symmetric crosses with base $U$
of every arity.

Next we show how to derive this theorem
of \cite{markovic-maroti-mckenzie} 
from the results of our paper.

\begin{lm}\label{hspfin}
In any given signature, the class of idempotent 
algebras with no cube term blockers is closed under the formation
of homomorphic images, subalgebras and finite products.
\end{lm}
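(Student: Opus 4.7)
The plan is to verify the three closure properties separately. Closure under subalgebras and under homomorphic images will each follow directly from \lmref{lm-basics}(5), which I will use repeatedly: a nonempty proper subuniverse $U$ of a subalgebra $\m b$ is a base for compatible symmetric $d$-ary crosses of $\m b$ for every $d$ if and only if every term operation of $\m b$ is $U$-absorbing in some variable. The main obstacle will be closure under finite products, where a straightforward projection argument succeeds in one subcase but a fiber argument is needed in the other.

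For subalgebras, if $\m c\le\m a$ and $(U,B)$ is a cube term blocker of $\m c$, then $U\subsetneq B$ are nonempty subuniverses of $\m a$, and the subalgebra on $B$ is the same viewed in $\m c$ or in $\m a$; so $(U,B)$ is already a cube term blocker of $\m a$. For homomorphic images, given a surjective $\varphi\colon\m a\twoheadrightarrow\m a'$ and a cube term blocker $(U',B')$ of $\m a'$, I would set $B=\varphi^{-1}(B')$ and $U=\varphi^{-1}(U')$; these are nonempty, $U\subsetneq B$, and $\varphi|_B\colon\m b\twoheadrightarrow\m b'$ remains surjective. For any term $t$, $t^{\m b'}$ is $U'$-absorbing in some variable $x_j$ by \lmref{lm-basics}(5); pulling this absorption back along $\varphi|_B$ (if $b_1,\dots,b_n\in B$ with $b_j\in U$, then $\varphi(t^{\m b}(b_1,\dots,b_n))=t^{\m b'}(\varphi(b_1),\dots,\varphi(b_n))\in U'$, so $t^{\m b}(b_1,\dots,b_n)\in U$) shows $t^{\m b}$ is $U$-absorbing in $x_j$. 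A final application of \lmref{lm-basics}(5) gives that $(U,B)$ is a cube term blocker of $\m a$.

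For finite products, an easy induction reduces to binary products. Assume $\m a_1$ and $\m a_2$ have no cube term blocker but $(U,V)$ is one for $\m a_1\times\m a_2$. Put $V_i=\pi_i(V)$ and $U_i=\pi_i(U)$; these are nonempty subuniverses of $\m a_i$ with $U_i\subseteq V_i$. For any term $t$, $t^{\m v}$ is $U$-absorbing in some variable $x_j$; given inputs $c_1,\dots,c_n\in V_i$ with $c_j\in U_i$, I can lift each $c_k$ to some $v_k\in V$ and $c_j$ to some $v_j\in U$ (possible because $\pi_i(U)=U_i$), and then $\pi_i(t^{\m v}(v_1,\dots,v_n))=t^{\m v_i}(c_1,\dots,c_n)\in\pi_i(U)=U_i$. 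Hence every term of the subalgebra $\m v_i$ on $V_i$ is $U_i$-absorbing in some variable. If $U_i\subsetneq V_i$ for either $i$, \lmref{lm-basics}(5) then makes $(U_i,V_i)$ a cube term blocker of $\m a_i$, contradicting the assumption. The delicate remaining case is $U_1=V_1$ and $U_2=V_2$ with $U\subsetneq V$; this is the main obstacle.

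To handle that case I would pass to fibers over the first coordinate. For $a_1\in V_1$, idempotence of $\m a_1$ makes $\{a_1\}$ a subuniverse of $\m a_1$, so $V[a_1]=\{c\in A_2:(a_1,c)\in V\}$ and $U[a_1]=\{c\in A_2:(a_1,c)\in U\}$ are subuniverses of $\m a_2$, both nonempty since $U_1=V_1\ni a_1$. Because $U\subsetneq V$, some $a_1$ must satisfy $U[a_1]\subsetneq V[a_1]$; fix such an $a_1$. For any term $t$, $t^{\m v}$ is $U$-absorbing in some $x_j$; specialising to inputs of the form $(a_1,c_k)$ with $c_k\in V[a_1]$ and $c_j\in U[a_1]$, idempotence of $\m a_1$ keeps the first coordinate of the output equal to $a_1$, and the second-coordinate content of the absorption says exactly that $t^{\m a_2}$ restricted to $V[a_1]$ is $U[a_1]$-absorbing in $x_j$. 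By \lmref{lm-basics}(5), $(U[a_1],V[a_1])$ is then a cube term blocker of the subalgebra of $\m a_2$ on $V[a_1]$, and the already-proved subalgebra closure contradicts the assumption that $\m a_2$ has no cube term blocker, completing the proof.
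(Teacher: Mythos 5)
Your proof is correct and follows essentially the same route as the paper: argue the contrapositive, pull blockers back along surjections, note that blockers of subalgebras are blockers of the whole algebra, and for products project the blocker to the factors, falling back on a fiber when the projections collapse. Your fiber $(U[a_1],V[a_1])$ is exactly the paper's intersection of $(U,B)$ with the subuniverse $\{a\}\times C$ followed by projection to the second factor, and your explicit absorption checks via Lemma~\ref{lm-basics}(5) merely spell out steps the paper leaves implicit.
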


\begin{proof}
We prove that the nonexistence of cube term blockers
is preserved under homomorphic images, subalgebras, and
finite products by arguing the contrapositive.
  
If $\varphi\colon \m a\to \m c$ is a surjective homomorphism and
$(U,B)$ is a cube term blocker of $\m c$, then it is easy to see that
$(\varphi^{-1}(U), \varphi^{-1}(B))$ is a cube term
blocker of $\m a$.
This can be done by checking that the inverse image, under $\varphi$,
of each compatible symmetric cross $\cross(U,\dots,U)$
of $\m b$ is a compatible relation of the subalgebra
$\varphi^{-1}(\m B)$ of $\m a$, and is equal to 
the symmetric cross 
$\cross\bigl(\varphi^{-1}(U),\dots,\varphi^{-1}(U)\bigr)$.

If $\m c\leq \m a$ and $(U,B)$ is a cube term
blocker of $\m c$, then it is also a cube term
blocker of $\m a$.

Before turning to products, first note that if $(U,B)$ is a cube
term blocker of $\m a$, and $V$ is a subuniverse of $\m a$,
then $(U\cap V, B\cap V)$ is a cube term blocker for $\m a$
unless $U\cap V = \emptyset$ or $U\cap V=B\cap V$.

Now, to prove the statement for products,
we assume that $(U,B)$ is a cube term blocker of $\m a\times \m c$,
and explain how to find a cube term blocker for either
$\m a$ or $\m c$.
Choose $(a,c)\in B-U$, and let $\pi_{\m a}, \pi_{\m c}$ be
the coordinate projections of $\m a\times \m c$. 
If $\pi_{\m a}(U)\neq \pi_{\m a}(B)$,
then $(\pi_{\m a}(U),\pi_{\m a}(B))$ is a cube blocker of $\m a$, and
we are done. Otherwise $\pi_{\m a}(U)=\pi_{\m a}(B)$,
hence $a\in \pi_{\m a}(B)=\pi_{\m a}(U)$, implying the existence
of an element $(a,d)\in U$.
Now we let $V=\{a\}\times C$ and apply the observation of the
previous paragraph: Since $\emptyset\neq U\cap V\neq B\cap V$,
the pair $(U\cap V, B\cap V)$ is a cube term blocker
for $\m a\times \m c$. 
We further have that
\[
\emptyset\neq\pi_{\m c}(U\cap V)\neq\pi_{\m c}(B\cap V),
\]
since $d\in \pi_{\m c}(U\cap V)$ and 
$c\in \pi_{\m c}(B\cap V)-\pi_{\m c}(U\cap V)$.
Hence 
$(\pi_{\m c}(U\cap C),\pi_{\m c}(B\cap V))$ is a cube term
blocker for $\m c$. 
\end{proof}

\begin{cor}\label{finite-case}
If\/ $\m a$ is a finite idempotent algebra, then the following
are equivalent:
\begin{enumerate}
\item[(1)] $\m a$  has no cube term.
\item[(2)]
${\mathcal V}(\m a)$  has no cube term.
\item[(3)]
$\m f_{{\mathcal V}(\m a)}(x,y)$ has a cube term blocker.
\item[(4)]
$\m a$ has a cube term blocker.
\end{enumerate}
\end{cor}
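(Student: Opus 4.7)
The plan is to establish the four-way equivalence via $(1)\Leftrightarrow(2)\Leftrightarrow(3)$ together with $(3)\Rightarrow(4)$ and $(4)\Rightarrow(1)$. The first two equivalences are essentially restatements: $(1)\Leftrightarrow(2)$ holds because being a $d$-cube term is expressible by a finite conjunction of identities in the two variables, so $\m a$ has such a term iff the variety $\mathcal V(\m a)$ it generates does; and $(2)\Leftrightarrow(3)$ is Theorem~\ref{main3} applied to $\mathcal V(\m a)$.

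For $(4)\Rightarrow(1)$, I will invoke Corollary~\ref{comp_crosses_of_cube_terms}. A cube term blocker $(U,B)$ of $\m a$ produces, by definition, a subalgebra $\m b\le\m a$ admitting a compatible symmetric $d$-ary cross with base $U$ for every $d$. Corollary~\ref{comp_crosses_of_cube_terms} then forces $\m b$ to have no $d$-cube term for any $d$. Since any cube term of $\m a$ restricts to a cube term of $\m b$, $\m a$ itself has no cube term.

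The main step is $(3)\Rightarrow(4)$, which I plan to prove by contraposition via Lemma~\ref{hspfin}. The key observation is that, since $\m a$ is finite, the $2$-generated free algebra $\m f=\m f_{\mathcal V(\m a)}(x,y)$ is itself finite: it embeds into the finite power $\m a^{A^2}$ as the subalgebra generated by the two coordinate-projection tuples, so $\m f\in\fin{SP}(\m a)$. If $\m a$ had no cube term blocker, then Lemma~\ref{hspfin} (closure of the class of idempotent algebras with no cube term blocker under $\Ho$, $\Su$, and finite products) would force every member of $\fin{SP}(\m a)$, and in particular $\m f$, to be without a cube term blocker, contradicting (3). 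The only ingredient beyond the previously cited results is the standard realization of the $2$-generated free algebra of $\mathcal V(\m a)$ as a subalgebra of a finite power of $\m a$; once this is in place, the remainder of the argument is a mechanical combination of Theorem~\ref{main3} and Lemma~\ref{hspfin}, and no genuine technical obstacle is anticipated.
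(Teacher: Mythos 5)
Your proof is correct and follows essentially the same route as the paper: the trivial equivalence of (1) and (2), Theorem~\ref{main3} for (2)$\Leftrightarrow$(3), the contrapositive of (3)$\Rightarrow$(4) via Lemma~\ref{hspfin} together with the realization of $\m f_{{\mathcal V}(\m a)}(x,y)$ inside a finite power of $\m a$ (the paper cites $\Hom\Sub\Prod_{\textrm{fin}}(\m a)$, you use the sharper $\Sub\Prod_{\textrm{fin}}$ embedding into $\m a^{A^2}$, which is fine), and Corollary~\ref{comp_crosses_of_cube_terms} for (4)$\Rightarrow$(1).
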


\begin{proof}
$(1)\Rightarrow(2)$ is clear, because  
a cube term for ${\mathcal V}(\m a)$ would be a cube term for $\m a$.
The implication $(2)\Rightarrow(3)$ 
follows from Theorem~\ref{main3}.
For $(3)\Rightarrow(4)$ we prove the contrapositive.
If $\m a$ has no cube term blocker, then
by Lemma~\ref{hspfin}
and by the fact that $\m f_{{\mathcal V}(\m a)}(x,y)$
lies in $\Hom\!\Sub\!\Prod_{\textrm{fin}}(\m a)$ when $\m a$ is finite,
we get that 
$\m f_{{\mathcal V}(\m a)}(x,y)$ has no cube term blocker.
Finally, $(4)\Rightarrow(1)$ follows from 
Corollary~\ref{comp_crosses_of_cube_terms}. 
\end{proof}

\section{The influence of finite signature}\label{example}

By a \emph{signature} we mean a pair
$\tau=({\mathcal O},\textbf{\sf arity})$ where 
${\mathcal O}$ is a 
set of operation symbols and $\textbf{\sf arity}\colon 
{\mathcal O}\to \omega$
is a function assigning arity. We consider 
only idempotent varieties,
so we may and do consider only signatures
where $\textbf{\sf arity}(f)\geq 2$
for all $f\in {\mathcal O}$. We will call such signatures 
``suitable for idempotent varieties''.

In this section we consider
only finite signatures, which are those where $|{\mathcal O}|<\omega$.
For such a signature $\tau$ we define 
\[
\lvert\tau\rvert = \max_{f\in {\mathcal O}}\bigl(\textbf{\sf arity}(f)\bigr)
\quad\text{and}\quad 
\lVert\tau\rVert = 1+\sum_{f\in {\mathcal O}} \bigl(\textbf{\sf arity}(f)-1\bigr).
\]
It is easy to see that 
$\lvert\tau\rvert = \lVert\tau\rVert$ if there
is only one operation in the signature, 
while $\lvert\tau\rvert < \lVert\tau\rVert$ if
there is more than one.

Now let us consider an idempotent variety
${\mathcal V}$ of finite signature $\tau$,
and let $\m f=\m f_{\mathcal V}(x,y)$ be the $\mathcal V$-free algebra
generated by $\{x,y\}$.
The main results of this section are the following:
\begin{itemize}
\item
If $\m f$ has a compatible cross of arity 
$\lVert\tau\rVert$ or more, then it has compatible crosses 
of all arities. Equivalently, if $\mathcal V$
has a cube term, then it has a
$\lVert \tau\rVert$-cube term (Theorem~\ref{main4}).
\item
For any suitable finite signature $\tau$
there exists an example for $\mathcal V$ where $\m f$ has a compatible
cross of arity 
$\lVert\tau\rVert-1$, but no compatible
crosses of higher arity.
Equivalently, for any suitable finite signature $\tau$
there exists an example for ${\mathcal V}$
such that ${\mathcal V}$ has a $\lVert\tau\rVert$-cube term,
but has no $d$-cube term for $d<\lVert\tau\rVert$
(Example~\ref{exmp1}). 
\end{itemize}
For symmetric crosses the corresponding statements are
as follows:
\begin{itemize}
\item
If $\m f$ has a compatible symmetric cross of arity 
$\lvert\tau\rvert$ or more, then it has compatible 
symmetric crosses 
of all arities (Corollary~\ref{cor-main4}).
\item
For any suitable finite signature
there exists an example where $\m f$ has a compatible
symmetric cross of arity 
$\lvert\tau\rvert-1$, but no compatible
symmetric crosses of higher arity
(Example~\ref{exmp2}).
\end{itemize}

By adding operations to a signature 
one can make $\lVert\tau\rVert$ large while
$\lvert\tau\rvert$ remains small.
Thus one can create varieties with cube terms
where the least dimension of a cube term is much greater
than the arities of the symmetric crosses
of $\m f$.
These results show that we can't use only
symmetric crosses 
to characterize the existence or nonexistence
of cube terms of a fixed dimension.

\begin{thm}\label{main4}
Let $\mathcal V$ be an idempotent variety of finite signature $\tau$.
If\/ $\mathcal V$ has no $\lVert\tau\rVert$-cube term, 
then it has no cube term at all.

In particular, if the signature of $\mathcal V$ consists of a single
binary operation symbol, then either
$\mathcal V$ has a Maltsev term or it has no cube term at all.
\end{thm}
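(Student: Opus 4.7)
The plan is to argue the contrapositive: assume $\mathcal V$ has no $\lVert\tau\rVert$-cube term and deduce that it has no cube term of any dimension. Set $d = \lVert\tau\rVert$. By Theorem~\ref{main1}(1), the algebra $\m f = \m f_{\mathcal V}(x,y)$ carries a compatible $d$-dimensional cross $\cross(U_0, \ldots, U_{d-1})$. The strategy is then to locate a single coordinate $j^\ast < d$ whose base set $U_{j^\ast}$ supports a compatible symmetric cross of every arity, giving a constant cross sequence and hence, by Theorem~\ref{main1}(2), ruling out any cube term.

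To find $j^\ast$, I would apply Lemma~\ref{lm-basics}(4) to each fundamental operation $f \in {\mathcal O}$. Since $\textbf{\sf arity}(f) \le \lvert\tau\rvert \le \lVert\tau\rVert = d$, the hypothesis of the lemma is satisfied, so there are at most $\textbf{\sf arity}(f) - 1$ indices $j < d$ at which $f$ fails to be $U_j$-absorbing in any of its variables. Summing over $f \in {\mathcal O}$ gives at most
\[
\sum_{f \in {\mathcal O}} \bigl(\textbf{\sf arity}(f) - 1\bigr) \;=\; \lVert\tau\rVert - 1 \;=\; d - 1
\]
"bad" indices in total. Since $d > d-1$, the pigeonhole principle furnishes an index $j^\ast < d$ that is good for every fundamental operation; that is, each $f \in {\mathcal O}$ is $U_{j^\ast}$-absorbing in at least one of its variables.

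By Lemma~\ref{lm-basics}(5), this means each fundamental operation of $\m f$ is compatible with the symmetric cross $\cross(U_{j^\ast}, \ldots, U_{j^\ast})$ of every arity $e \ge 1$. Compatibility with a relation is inherited by all term operations from the fundamental ones, so $\cross(U_{j^\ast}, \ldots, U_{j^\ast})$ is a compatible symmetric $e$-ary cross of $\m f$ for every $e$. Since $U_{j^\ast}$ is nonempty and a proper subset of $F$ (being one coordinate of the original cross), the constant $\omega$-sequence $(U_{j^\ast}, U_{j^\ast}, \ldots)$ is a cross sequence for $\m f$. Theorem~\ref{main1}(2) then yields that $\mathcal V$ has no cube term of any dimension, completing the contrapositive.

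For the last sentence, when $\tau$ consists of a single binary operation symbol we have $\lVert\tau\rVert = 1 + (2-1) = 2$, so a $\lVert\tau\rVert$-cube term is exactly a Maltsev term; thus $\mathcal V$ either admits a Maltsev term or no cube term at all. There is no real obstacle in this argument: the definition $\lVert\tau\rVert = 1 + \sum_f (\textbf{\sf arity}(f)-1)$ is engineered precisely so that the counting strictly exceeds the total absorption deficit from Lemma~\ref{lm-basics}(4), and the rest is a direct appeal to the machinery already established.
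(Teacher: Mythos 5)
Your proposal is correct and follows essentially the same route as the paper: apply Lemma~\ref{lm-basics}(4) to each fundamental operation of the compatible $\lVert\tau\rVert$-ary cross guaranteed by Theorem~\ref{main1}(1), count at most $\sum_f(\textbf{\sf arity}(f)-1)=\lVert\tau\rVert-1$ bad indices, pigeonhole to get a coordinate $j^\ast$ absorbed by every fundamental operation, and invoke Lemma~\ref{lm-basics}(5). The only cosmetic difference is that you finish via the cross-sequence formulation of Theorem~\ref{main1}(2) where the paper phrases the same conclusion as a cube term blocker via Theorem~\ref{main3}.
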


\begin{proof}
  In the second statement, our assumption on the signature $\tau$ forces
  that $\rVert\tau\lVert=2$. Hence the claim is an easy consequence of the
  first statement of the theorem and the fact that a variety has a Maltsev term
  if and only if it has a $2$-cube term.

  To prove the first statement, assume that ${\mathcal V}$
  is an idempotent variety of finite signature $\tau$, and let
  $\m f=\m f_{\mathcal V}(x,y)$.
  We know from Theorem~\ref{main1}~(1) that
  for every $d\ge1$,
  \[
  \text{${\mathcal V}$ has no $d$-cube term}
  \quad\Leftrightarrow\quad
  \text{$\m f$ has a compatible $d$-ary cross.}
  \]
We also know from Theorem~\ref{main3}, from
the definition of a cube term blocker, and from
Corollary~\ref{comp_crosses_of_cube_terms} that
  \[
  \begin{matrix}
    \text{${\mathcal V}$ has no cube term}
    & \stackrel{\rm Thm~\ref{main3}}{\Leftrightarrow}
    & \begin{matrix}
        \text{$\m f$ has a cube term blocker $(U,F)$ where}\\
        \text{$U$ is a nonempty proper subuniverse of $\m f$}
      \end{matrix}\\  
         {\scriptstyle\rm Cor~\ref{comp_crosses_of_cube_terms}}\Uparrow
              \phantom{{\scriptstyle\rm Cor~\ref{comp_crosses_of_cube_terms}}}
         &
         & \phantom{{\scriptstyle\rm{def}}}\Updownarrow{\scriptstyle\rm{def}} \\
    \begin{matrix}
      \text{$\m f$ has compatible crosses}\\
      \text{of all arities}
    \end{matrix}
    & \Leftarrow
    &  \begin{matrix}
         \text{$U$ is the base for compatible symmetric}\\
         \text{crosses of $\m f$ of all arities;}
       \end{matrix}  
 \end{matrix}   
 \]
hence all four conditions displayed here are equivalent.
Therefore it suffices to prove the first statement of Theorem~\ref{main4}
  in the following equivalent formulation.

  \begin{clm}\label{clm-main4}
    Let ${\mathcal V}$ be an idempotent variety
    of finite signature $\tau$,
    and let $\m f=\m f_{\mathcal V}(x,y)$ be the $\mathcal V$-free algebra
    over the free generating set $\{x,y\}$.
    If\/ $\m f$ has a compatible cross of arity $\ge\lVert\tau\rVert$, then
$\m f$ has a nonempty proper subuniverse $U$ such that 
$(U,F)$ is a cube term blocker for $\m f$. 
(That is, $U$ is a base for compatible symmetric crosses
of $\m f$ of all arities.)
  \end{clm}

  \begin{proof}[Proof of Claim~\ref{clm-main4}]
    Assume that
    $\m f$ has a compatible $d$-ary cross $\cross(U_0,\dots,U_{d-1})$
    where $d\ge\lVert\tau\rVert$.
  Let $f_0,\dots,f_{k-1}$ be the operation symbols of $\tau$, and let
  $\textbf{\sf arity}(f_i)=n_i$ ($i<k$). So,
  $d\ge\lVert\tau\rVert=1+\sum_{i=0}^{k-1}(n_i-1)$.
  Applying Lemma~\ref{lm-basics}~(4) to the basic operations
  $f_0,\dots,f_{k-1}$ of $\m f$ we see that for each $i<k$
  there exists a subset $K_i$ of $\ddd:=\mbox{$\{0,\dots,d-1\}$}$ such that
  $|K_i|\le n_i-1$ and $f_i$ has a $U_j$-absorbing variable
  for every $j\in\ddd-K_i$.
  Since $|\ddd-\bigcup_{i=0}^{k-1}K_i|\ge d-\sum_{i=0}^{k-1}(n_i-1)\ge1$,
  we obtain that
  there exists at least one $j\in\ddd$ such that every $f_i$ ($i<k$)
  has a $U_j$-absorbing variable.
  It follows from Lemma~\ref{lm-basics}~(5) that every $f_i$ ($i<k$) is
  compatible with the symmetric crosses $\cross(U_j,\dots,U_j)$ of all
  arities. Hence, the symmetric crosses $\cross(U_j,\dots,U_j)$ of all arities
  are compatible relations of $\m f$.
Equivalently, $(U_j,F)$ is a cube term blocker for $\m f$.
  \renewcommand{\qedsymbol}{$\diamond$} 
  \end{proof}
This finishes the proof of Theorem~\ref{main4}.
\end{proof}

The analog of Claim~\ref{clm-main4} for symmetric crosses can be proved
similarly.

\begin{cor}\label{cor-main4}
  Let $\mathcal V$ be an idempotent variety of finite signature $\tau$, and let
  $\m f=\m f_{\mathcal V}(x,y)$ be the ${\mathcal V}$-free algebra with free
  generators $x,y$.
  If\/ $\m f$ has a compatible symmetric cross $\cross(U,\dots,U)$ of arity
  $\ge\lvert\tau\rvert$, then
  $(U,F)$ is a cube term blocker for $\m f$.
  (That is, $U$ is a base for compatible symmetric crosses
  of $\m f$ of all arities.)
\end{cor}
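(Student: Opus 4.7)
The plan is to mirror the argument of Claim~\ref{clm-main4}, but to replace the Hall's Marriage Theorem step with a direct appeal to Lemma~\ref{lm-basics}(5). The point is that for symmetric crosses, part~(5) of that lemma already equates compatibility with a \emph{single} symmetric cross of sufficiently high arity to compatibility with symmetric crosses of \emph{all} arities, so no combinatorial maneuvering across multiple sets $U_j$ is needed; a single dimension bound $\lvert\tau\rvert$ rather than the larger $\lVert\tau\rVert$ therefore suffices.

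First I would enumerate the basic operation symbols of $\tau$ as $f_0,\dots,f_{k-1}$, with $\textbf{\sf arity}(f_i)=n_i$, so that $\lvert\tau\rvert=\max_{i<k}n_i$. The hypothesis provides a compatible symmetric cross $\cross(U,\dots,U)$ of some arity $d\ge\lvert\tau\rvert$. Since $\m f$ preserves this relation, each basic operation $f_i$ preserves it as well, and since $d\ge n_i$, the implication (i)$\Rightarrow$(ii) of Lemma~\ref{lm-basics}(5) applies to $f_i$, yielding that $f_i$ is $U$-absorbing in some one of its variables.

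Then I would invoke the implication (ii)$\Rightarrow$(iii) of Lemma~\ref{lm-basics}(5) to conclude that each $f_i$ is compatible with the symmetric cross $\cross(U,\dots,U)$ of every arity $\ge 1$. Hence every basic operation of $\m f$, and therefore every term operation of $\m f$, is compatible with $\cross(U,\dots,U)$ for every arity. By Lemma~\ref{lm-basics}(2), $U$ is a subuniverse of $\m f$, and by the very definition of a cross, $U$ is a nonempty proper subset of $F$. Consequently $(U,F)$ is a cube term blocker of $\m f$, which is the desired conclusion.

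I do not expect any real obstacle here: the clause ``for some $d\ge n$'' built into Lemma~\ref{lm-basics}(5)(i) is tailored precisely so that a single symmetric cross of arity at least $\lvert\tau\rvert$ simultaneously pins down a $U$-absorbing variable in every basic operation, which is the only ingredient needed to promote compatibility of the given cross to compatibility of all symmetric crosses with base $U$. The contrast with Claim~\ref{clm-main4} is instructive: there the sets $U_j$ varied with $j$, which is why one needed Hall's Marriage Theorem and the larger bound $\lVert\tau\rVert$; in the symmetric case a uniform base $U$ collapses the combinatorics entirely.
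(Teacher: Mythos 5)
Your proof is correct and is essentially the paper's own argument: the paper obtains the $U$-absorbing variable for each basic operation from Lemma~\ref{lm-basics}(4) (using $d\ge n_i$) and then applies Lemma~\ref{lm-basics}(5) to upgrade to symmetric crosses of all arities, while you route the first step through the implication (i)$\Rightarrow$(ii) of Lemma~\ref{lm-basics}(5), which is itself proved from part (4), so the two proofs coincide in substance. Your additional remarks that $U$ is a subuniverse (Lemma~\ref{lm-basics}(2)) and a nonempty proper subset are fine and just make explicit what the paper leaves implicit in calling $(U,F)$ a cube term blocker.
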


\begin{proof}
  Assume that $\m f$ has a compatible symmetric $d$-ary cross
  $\cross(U,\dots,U)$ where $d\ge\lvert\tau\rvert$. As before,
  let $f_0,\dots,f_{k-1}$ be the operation symbols of $\tau$, and let
  $\textbf{\sf arity}(f_i)=n_i$ ($i<k$). So,
  $d\ge n_i$ for all $i$. It follows from
  Lemma~\ref{lm-basics}~(4) that every basic operation 
  $f_0,\dots,f_{k-1}$ of $\m f$ has a $U$-absorbing variable.
  Hence, by Lemma~\ref{lm-basics}~(5), $f_0,\dots,f_{k-1}$ are
  compatible with the symmetric crosses $\cross(U,\dots,U)$ of all
  arities. Thus, the symmetric crosses $\cross(U,\dots,U)$ of all arities
  are compatible relations of $\m f$,
or equivalently, $(U,F)$ is a cube term blocker for $\m f$.
\end{proof}  

\begin{exmp}\label{exmp1}
Our goal in this example is to show that the
bound in Theorem~\ref{main4} is sharp. That theorem shows
that if an idempotent variety of finite signature $\tau$
has a $d$-cube term for some $d$, then it has 
one for $d=\lVert\tau\rVert$. 
In this example we construct,
for any suitable finite signature $\tau$,
an idempotent variety that has a $\lVert\tau\rVert$-cube term,
but no $d$-cube term for $d< \lVert\tau\rVert$. 

If one revisits the definition of ``$d$-cube term'',
one sees that the concept of a $1$-cube term is degenerate:
the only varieties with $1$-cube terms are varieties of $1$-element
algebras, and for these varieties any term without nullary
symbols is a $1$-cube term. As noted earlier, a variety
has a $2$-cube term if and only if it has a Maltsev term.
Thus the simplest nondegenerate example to be exhibited
is that of a nontrivial variety with a Maltsev term
in a signature $\tau$ satisfying $\lVert\tau\rVert=2$.
If $\tau$ is suitable for idempotent varieties, then
$\lVert\tau\rVert=2$ implies exactly that
$\tau$ is a signature with one operation, which is binary.
For this signature, take as an example the variety
generated by $\lb \mathbb Z_3; f(x,y)\rangle$ where
$f(x,y)=2x+2y$. This variety is nontrivial, has
$\lVert\tau\rVert=2$, and has 
a Maltsev term $m(x,y,z):=f(f(x,z),y)=x+2y+z$.

The cases where $\lVert\tau\rVert>2$ will be handled
by a uniform construction.
Suppose that $\tau$ has $m$ operation
symbols. Set $n_i=\textbf{\sf arity}(f_i)$, $1\leq i\leq m$, and
set $n:=\lVert\tau\rVert-1=\sum_{i=1}^m (n_i-1)$.
If 
\begin{align*}
C_1&{}=\{1,2,\cdots,(n_1-1)\},\\
C_2&{}=\{(n_1-1)+1, (n_1-1)+2,\cdots,(n_1-1)+(n_2-1)\},\\
 & {}\ \,\vdots\\
C_j&{}=\Bigl\{\Bigl(\sum_{i=1}^{j-1} (n_i-1)\Bigr)+1,\cdots,
                          \Bigl(\sum_{i=1}^{j} (n_i-1)\Bigr)\Bigr\},\\
 & {}\ \,\vdots\\
C_m&{}=\Bigl\{\Bigl(\sum_{i=1}^{m-1} (n_i-1)\Bigr)+1,\cdots,n\Bigr\},
\end{align*}
then 
$\{C_1,\ldots,C_m\}$
is a partition of $[n]:=\{1,\ldots,n\}$ 
whose cells that are in 1--1 correspondence
with the operation symbols: $C_i\leftrightarrow f_i$.
Moreover, $|C_i|=\textbf{\sf arity}(f_i)-1$.
The elements of $[n]$
are going to be coordinates
in a product algebra. To describe the construction
of the algebra we will use the terminology that an element
$j\in [n]$ \emph{belongs to $f_i$},
(or $f_i$ belongs to $j$) if $j\in C_i$.

The universe of the product algebra 
will be $A=\{0,1\}^n$. We will explain how to interpret
each symbol $f_i$ on this set, by describing its
behavior in each coordinate. We need some terminology to do this.
Let $\vee$ (join) and $\wedge$ (meet) be the lattice operations
on $\{0,1\}$ for the order $0<1$. In any given coordinate
we will interpret $f_i$ as either:
\begin{enumerate}
\item[(1)] the \emph{$n_i$-ary join} on $\{0,1\}$: 
\[f_i(x_1,\ldots,x_{n_i}) = \bigvee_{1\leq j\leq n_i} x_j,\]
or
\item[(2)] the \emph{``canonical'' $n_i$-ary
near-unanimity operation}
on $\{0,1\}$,
namely
\[f_i(x_1,\ldots,x_{n_i}) = \bigvee_{1\leq j<k\leq n_i} (x_j\wedge x_k).\]
(This operation is a near-unanimity operation only
when $n_i>2$, but we shall use the terminology even when 
$n_i=2$. In this situation $f_i(x_1,x_2)=x_1\wedge x_2$.)
\end{enumerate}

We interpret $f_i$ on $A = \{0,1\}^n$ by stipulating that
it acts coordinatewise, and that it acts like
the canonical $n_i$-ary
near-unanimity operation
in the coordinates that belong to $f_i$ and 
like the $n_i$-ary join operation
in the coordinates that do not belong to $f_i$.

The set $A$ equipped with the operations 
$f_1,\ldots,f_m$ just defined is the algebra we
call $\m a$.
Each $f_i$ is idempotent on $\m a$, since join, meet
and near-unanimity are idempotent. Therefore, the set
$U_j\subseteq A=\{0,1\}^n$ consisting of all $n$-tuples
with $1$ in the $j$-th coordinate
is a nonempty proper subuniverse of $\m a$
for each $j$ between $1$ and $n$.

\begin{clm}\label{cross_bound}
$\cross(U_1,\ldots,U_n)$ is a compatible $(\lVert\tau\rVert-1)$-ary
cross of $\m a$.
\end{clm}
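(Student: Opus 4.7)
The plan is to verify directly that each basic operation $f_i$ of $\m a$ preserves $\cross(U_1,\ldots,U_n)$. Fix $i \in \{1,\ldots,m\}$, and choose $n_i$ elements $\vec{a}^{(1)},\ldots,\vec{a}^{(n_i)}$ of the cross. By the definition of the cross, for each $\ell \in \{1,\ldots,n_i\}$ there is a column-index $j_\ell \in \{1,\ldots,n\}$ with $\vec{a}^{(\ell)}_{j_\ell} \in U_{j_\ell}$; equivalently, the bit-tuple $\vec{a}^{(\ell)}_{j_\ell} \in \{0,1\}^n$ has a $1$ in bit-position $j_\ell$. The goal is to produce a column-index $j^*$ such that the $j^*$-th component of $f_i(\vec{a}^{(1)},\ldots,\vec{a}^{(n_i)})$ again lies in $U_{j^*}$, i.e.\ has a $1$ in its $j^*$-th bit.

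The key pigeonhole observation is that $|C_i| = n_i - 1 < n_i$, so the $n_i$ witnesses $j_1,\ldots,j_{n_i}$ cannot be $n_i$ distinct elements of $C_i$. Hence one of the following must occur: (a) two witnesses coincide, $j_\ell = j_{\ell'} = j^*$ for some $\ell \ne \ell'$; or (b) some witness $j^* := j_\ell$ lies outside $C_i$. I claim that in either situation, this $j^*$ serves as the desired column-index.

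To verify the claim, recall that $f_i$ acts on $A = \{0,1\}^n$ bit-coordinatewise, so the $j^*$-th bit of the $j^*$-th component of $f_i(\vec{a}^{(1)},\ldots,\vec{a}^{(n_i)})$ equals the two-element version of $f_i$ (the $n_i$-ary join if $j^* \notin C_i$, the canonical near-unanimity if $j^* \in C_i$) applied to the $j^*$-th bits of $\vec{a}^{(1)}_{j^*},\ldots,\vec{a}^{(n_i)}_{j^*}$. In case (b), the operation is a join, and the $\ell$-th input bit equals $1$ since $j_\ell = j^*$, so the join is $1$. In case (a), if $j^* \notin C_i$ the same join argument works; if $j^* \in C_i$, the canonical near-unanimity $\bigvee_{p < q}(x_p \wedge x_q)$ evaluates to $1$ because two of its input bits, those indexed by $\ell$ and $\ell'$, are $1$.

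The main obstacle is purely notational: the construction mixes two layers of $n$-indexing — the $n$ columns of the cross and the $n$ bits inside $A = \{0,1\}^n$ — and the subuniverses $U_j$ couple them by stipulating that the $j$-th column is in $U_j$ exactly when its $j$-th bit is $1$. Once that coupling is made explicit, the argument reduces to the elementary pigeonhole above, exploiting precisely the fact that $|C_i| = n_i - 1$ was chosen one short of $n_i$.
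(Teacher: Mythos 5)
Your proof is correct and follows essentially the same route as the paper's: the paper phrases the cross elements as $n\times n$ $0$--$1$ matrices having a $1$ on the diagonal and splits into the cases ``some diagonal $1$ at a position outside $C_i$'' (join gives $1$) versus ``all diagonal $1$'s inside $C_i$'' (pigeonhole on $|C_i|=n_i-1$ forces two arguments sharing a position, and the near-unanimity operation gives $1$). Your witness-per-argument formulation is the same pigeonhole and case analysis in slightly different notation, so there is nothing further to add.
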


\begin{proof}[Proof of Claim~\ref{cross_bound}.]
It is only the $n>1$ case of the claim that is interesting,
and we are in that case 
since $\lVert\tau\rVert>2$ and $n=\lVert\tau\rVert-1$.

Elements of $A=\{0,1\}^n$ will be represented
by rows of length $n$ consisting of $0$'s and $1$'s.
The elements of $\cross(U_1,\ldots,U_n)$ are $n$-tuples
of elements of $A$, so could be represented by columns
of length $n$, where each entry in the column is a 
row of length $n$. But instead of doing this, we drop 
parentheses and consider elements of 
$\cross(U_1,\ldots,U_n)$ to be $n\times n$ matrices
of $0$'s and $1$'s. For such a matrix to belong
to this relation we must have the first row
in $U_1$ or the second row in $U_2$, etc.
Since a row of length $n$ belongs to $U_i$
if and only if it has a $1$ in the $i$-th place,
it follows that an $n\times n$ matrix belongs to 
$\cross(U_1,\ldots,U_n)$ exactly if it has a $1$
somewhere on the diagonal.

The operations of $\m a$ act coordinatewise on the columns
in a relation,
and, in a given coordinate, act coordinatewise on rows.
Thus, the operations of $\m a$ act coordinatewise on matrices.
We must show that if $f_i$ is one of the operations
of $\m a$ and $M_1,\ldots, M_{n_i}\in \cross(U_1,\ldots,U_n)$,
then $f_i(M_1,\ldots,M_{n_i})\in \cross(U_1,\ldots,U_n)$.

Suppose that some $M_k$ has a $1$ on its diagonal
in the $j,j$-th entry, where $j$ does not belong
to $f_i$. Then, as $f_i$ acts as $n_i$-ary
join in the $j,j$-position, it follows that 
$f_i(M_1,\ldots,M_{n_i})$ has a $1$ in its $j,j$-th entry, so
$f_i(M_1,\ldots,M_{n_i})\in \cross(U_1,\ldots,U_n)$.
Now suppose that each $M_k$ only has $1$'s in entries
$j,j$ where $j$ does belong to $f_i$. 
Since there are $n_i$ arguments of $f_i$, and only
$n_i-1$ distinct $j$'s that belong to $f_i$, it must be
that there are two matrices $M_k$ and $M_{\ell}$, $1\leq k<\ell\leq n_i$
which both have $1$ in the $j,j$-position for some $j$ belonging
to $f_i$. Since $f_i$ acts like the canonical $n_i$-ary
near unanimity operation in position $j,j$, it follows that
$f_i(M_1,\ldots,M_{n_i})$ has a $1$ in its $j,j$-th entry, so
$f_i(M_1,\ldots,M_{n_i})\in \cross(U_1,\ldots,U_n)$.
These arguments establish the claim.
  \renewcommand{\qedsymbol}{$\diamond$} 
  \end{proof}

\begin{clm}\label{cube_exists}
$\m a$ has a 
cube term.
\end{clm}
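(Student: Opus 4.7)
My plan is to invoke Corollary~\ref{finite-case} and reduce the claim to showing that $\m a$ has no cube term blocker. Suppose for contradiction that $(U, A)$ is one: then $U$ is a nonempty proper subuniverse of $\m a$, and by Lemma~\ref{lm-basics}~(5) each basic operation $f_i$ is $U$-absorbing in some variable. Since both $\vee$ and the canonical near-unanimity are symmetric in all their arguments, $f_i$ itself is symmetric, so in fact $f_i(u, a_2, \ldots, a_{n_i}) \in U$ whenever $u \in U$ and $a_2, \ldots, a_{n_i} \in A$. From this I will derive $U = A$, contradicting properness, via three coordinatewise computations that exploit the fact that in any fixed coordinate $j$ exactly one $f_i$ acts as near-unanimity (the one with $j \in C_i$) and all others act as join.

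First I will show $\mathbf{1} \in U$. If some $n_i \ge 3$, the tuple $f_i(u, \mathbf{1}, \ldots, \mathbf{1})$ equals $\mathbf{1}$ for any $u \in U$, since inside $C_i$ the near-unanimity sees $n_i - 1 \ge 2$ ones and outside $C_i$ the join outputs $1$. If instead every $n_i = 2$, each $C_i$ is a singleton $\{j_i\}$ and the $j_i$'s partition $[n]$; I will argue that some $u \in U$ must satisfy $u_{j_i} = 1$ for some $i$ (otherwise $U = \{\mathbf{0}\}$, yet $\{\mathbf{0}\}$ is not $f_i$-absorbing because the join coordinates recover $1$'s from the second argument), whence $f_i(u, \mathbf{1}) = \mathbf{1}$.

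Next I will obtain $\mathbf{0} \in U$ by iterating the absorption. Applied to $u = \mathbf{1}$ with the remaining arguments equal to $\mathbf{0}$, the operation $f_i$ yields $\mathbf{1}_{[n] - C_i}$: the near-unanimity in coordinates of $C_i$ sees a lone $1$ and outputs $0$, while the joins elsewhere output $1$. Iterating across $f_1, \ldots, f_m$ zeros out $C_1 \cup \cdots \cup C_m = [n]$, giving $\mathbf{0} \in U$. Finally, to show $U = A$: if some $n_i \ge 3$, then for any $b \in A$ the tuple $f_i(\mathbf{0}, b, b, \mathbf{0}, \ldots, \mathbf{0})$ equals $b$, as inside $C_i$ the two copies of $b_j$ force the near-unanimity value to $b_j$ and outside $C_i$ the join also produces $b_j$; if instead every $n_i = 2$, the iteration above already produces $\mathbf{1}_T \in U$ for every $T \subseteq [n]$ (the singletons $C_i$ span all subsets), and every element of $A = \{0,1\}^n$ equals $\mathbf{1}_{\supp(b)}$.

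The main obstacle is the all-$n_i = 2$ edge case: there the canonical near-unanimity degenerates to $\wedge$ and the generic ``heat up to $\mathbf{1}$'' maneuver in the first stage fails, forcing a separate small argument; the same degeneracy means the last stage must read $U = A$ directly from the iteration rather than from one well-chosen absorbing evaluation. Apart from this wrinkle the proof is a routine chain of coordinatewise computations driven by the rigid ``join outside $C_i$, near-unanimity inside $C_i$'' structure of $\m a$.
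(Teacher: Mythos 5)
Your coordinatewise computations are fine as far as they go, but the reduction step at the start has a genuine gap: you assume the cube term blocker has the form $(U,A)$. Corollary~\ref{finite-case} only tells you that if $\m a$ had no cube term then $\m a$ would have \emph{some} cube term blocker $(U,B)$, where $B$ may be a proper subuniverse of $\m a$; the absorption/cross condition is then taken inside the subalgebra $\m b$, not on all of $A$. (Only Theorem~\ref{main2}/Theorem~\ref{main3} give a blocker with full second component, and they do so for the free algebra $\m f_{\mathcal V(\m a)}(x,y)$, not for $\m a$ itself.) Your argument uses the elements $\mathbf 1$, $\mathbf 0$, and arbitrary $b\in A$ as arguments of the absorbing operations, so it only rules out blockers with $B=A$. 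For a blocker $(U,B)$ with $B\subsetneq A$ these elements need not lie in $B$, and subuniverses of $\m a$ need not be product subsets or contain $\mathbf 0,\mathbf 1$: for instance, with $n=m=2$ and both operations binary, $B=\{(0,1),(1,0)\}$ is a ``diagonal'' subuniverse on which $f_1$ and $f_2$ restrict to two different semilattice operations, and none of your three stages can even be started inside it. So the case $B\subsetneq A$ is not a routine extension of your computation, and without it Corollary~\ref{finite-case} does not yield the claim.

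The paper avoids this issue by a different decomposition: since the operations act coordinatewise, $\m a$ is the product of its $n$ two-element coordinate factors; each factor has a near-unanimity (hence cube) term — either the canonical near-unanimity operation directly, or, when the operation belonging to that coordinate is binary meet, a ternary near-unanimity term built from that meet and a join coming from another operation — and then one cites closure of ``has a cube term'' under finite products (Corollary~\ref{finite-case} together with Lemma~\ref{hspfin}, or Corollary~2.5 of Markovi\'c--Mar\'oti--McKenzie). If you want to keep your blocker-based strategy, you would either have to carry out the analysis for arbitrary subalgebras $\m b\le\m a$, or switch to the factor-by-factor argument.
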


\begin{proof}[Proof of Claim~\ref{cube_exists}.]
The fact that the operations of $\m a$
are defined coordinatewise on $\{0,1\}^n$
implies that $\m a$ is a product
of its $2$-element coordinate factor algebras. Thus,
to prove this claim, it is enough to show that each coordinate
factor of $\m a$ has a cube term. 
That this is enough follows from our Corollary~\ref{finite-case}, combined with
Lemma~\ref{hspfin}, 
or from Corollary~2.5 of 
\cite{markovic-maroti-mckenzie}. Namely, each result implies
that if each algebra in a finite
family has a cube term, then the product also has a cube term.

But it is easy to see that each 
coordinate factor of $\m a$ has a cube term
(in fact, a near-unanimity term).
To see this, consider the $j$-th 
coordinate factor algebra 
and suppose that $f_i$ belongs to $j$.
If $\textbf{\sf arity}(f_i)>2$, then $f_i$ interprets
as the canonical $n_i$-ary 
near-unanimity operation in the $j$-th coordinate
and we are done.
If $\textbf{\sf arity}(f_i)=2$, then $f_i$ interprets
as binary meet in the $j$-th coordinate and $f_i$ belongs to
no coordinate other than $j$. Since $n=\lVert\tau\rVert-1>1$,
there exists some coordinate different from $j$.
Hence there must exist some 
$f_k\neq f_i$ belonging to a coordinate
other than $j$. In this case, $f_k$ will interpret
as $n_k$-ary join in the $j$-th coordinate.
With join coming from $f_k$ and meet coming from
$f_i$ one can construct a ternary
near-unanimity operation in coordinate $j$.
  \renewcommand{\qedsymbol}{$\diamond$} 
  \end{proof}

Claim~\ref{cube_exists} shows that $\m a$ has a 
$d$-cube term for some $d$.
Hence, we can use Theorem~\ref{main4} to conclude that
$\m a$ has a $\lVert\tau\rVert$-cube term.
On the other hand,  
by Corollary~\ref{comp_crosses_of_cube_terms},
Claim~\ref{cross_bound} prevents $\m a$ from having
a $d$-cube term for any $d<\lVert\tau\rVert$.
This proves all required properties of $\m a$.
\hfill\qed
\end{exmp}

\begin{remark}
  Example~\ref{exmp1} was discovered
  with the help of UACalc, a universal algebra calculator.
  After including it here we learned that the preprint
  \cite{ca-co-valeriote} by
  Campenella, Conley and Valeriote
  contains essentially the same example. We are informed
  that they 
  also discovered the example with the help of UACalc. The purpose
  of the example in their paper is roughly the same as ours
  (i.e., lower bounds for dimension estimates),
  except our application is to cube terms and their
  application is to near unanimity terms.
  \end{remark}

\begin{exmp}\label{exmp2}
  In this example our goal is to show that the bound in
  Corollary~\ref{cor-main4} is sharp. Accordingly, we want to construct, 
  for any suitable finite signature $\tau$, an
  idempotent variety $\mathcal{V}$ such that the free algebra
  $\m f=\m f_{\mathcal{V}}(x,y)$ has a compatible symmetric cross of arity
  $|\tau|-1$, but no compatible symmetric crosses of higher arity.

  If $\tau$ is suitable for idempotent varieties and $|\tau|=2$, then
  $\tau$ is a signature with binary operation symbols only,
  say $f_1,\dots,f_m$. In this case
  we can choose $\mathcal{V}$ to be the variety generated by an algebra
  $\langle\mathbb{Z}_3;f_1,\dots,f_m\rangle$ where
  $f_1(x,y)=2x+2y$ and for $2\le i\le m$, $f_i$ is interpreted as a
  projection.
  Since $\mathcal{V}$ has a Maltsev term,
  $\m f$ has no compatible symmetric crosses of arity $\ge 2$. However,
  $\cross(\{x\})$ is a compatible symmetric cross of $\m f$ of arity $1$.

  Let us assume from now on that $|\tau|\ge3$, and let $f_1,\dots,f_m$
  denote the operation symbols in $\tau$ where $f_i$ is $n_i$-ary
  ($1\le i\le m$). We may assume without loss of generality that
  $n_1\ge \dots\ge n_m$, so $|\tau|=n_1\,(\ge3)$.
  Now consider an algebra $\m b=\langle\{0,1\};f_1,\dots,f_m\rangle$
  where $f_1$ is interpreted on $\{0,1\}$ as the
  ``canonical'' $n_1$-ary near-unanimity operation (see the definition
  in Example~\ref{exmp1}), and for $2\le i\le m$,
  $f_i$ is interpreted as a projection.
  Let $\mathcal{V}$ be the variety generated by $\m b$, and let
  $\m f=\m f_{\mathcal{V}}(x,y)$. Since $\mathcal{V}$ has an $n_1$-ary
  near-unanimity operation, we know from
  Corollary~\ref{comp_crosses_of_cube_terms} that
  $\m f$ has no compatible crosses of arity $\ge n_1=|\tau|$.
  On the other hand, since the $(n_1-1)$-ary cross
  $\cross(\{1\},\dots,\{1\})=\{0,1\}^{n_1-1}\setminus\{(0,\dots,0)\}$
  is a compatible relation of $\m b$, its inverse image
  under the homomophism $\m f\to\m b$ sending $x$ to $1$ and $y$ to $0$ 
  is a compatible symmetric cross of $\m f$ of arity $n_1-1=|\tau|-1$.
\end{exmp}

\section{A fact about cyclic term varieties}\label{bergman-section}

This note emerged in response to a question
we learned from Cliff Bergman:
Suppose that ${\mathcal C}_2$ is the variety 
defined with one binary operation and axiomatized by
the identities
\begin{enumerate}
\item[(1)] $w(x,x)=x$, and
\item[(2)] $w(x,y)=w(y,x)$.
\end{enumerate}
Is it true that every subvariety of ${\mathcal C}_2$ either contains the 
$2$-element semilattice or is congruence permutable?

Bergman's question arose out of a certain line of 
investigation into constraint satisfaction problems.
Namely, it is of interest to understand whether
the algebras in the \emph{pure cyclic term} varieties
have tractable CSP's. The $d$-ary pure cyclic term 
variety ${\mathcal C}_d$ is defined with one $d$-ary operation
satisfying
\begin{enumerate}
\item[(1)] $w(x,x,\ldots,x)=x$, and
\item[(2)] $w(x_1,x_2,\ldots,x_d)=w(x_2,x_3,\ldots,x_1)$.
\end{enumerate}
If one could show that each finite algebra in each 
variety ${\mathcal C}_d$ has tractable associated CSP's,
then one would have solved the Feder--Vardi Conjecture.

Bergman and David Failing showed in \cite{bergman-failing}
that if $\mathcal V$ is a subvariety of 
${\mathcal C}_2$
that is the join of a congruence permutable variety
and the variety of semilattices, 
then the finite algebras in $\mathcal V$ have tractable 
associated CSP's.
So, Bergman was really asking whether this theorem
applied to every subvariety of ${\mathcal C}_2$
that is a join of the variety of semilattices
and a disjoint subvariety. When Bergman asked the question,
he mentioned that an affirmative answer was supported
by extensive computer computations performed by 
Bergman, William DeMeo, and Jiali Li.

Here we explain why the answer to Bergman's question is affirmative,
even with $2$ replaced by $d$. That is, we explain why a subvariety
of the $d$-ary pure cyclic term variety either contains
a 2-element semilattice or else has a $d$-cube term.
For this, define 
a \emph{$2$-element semilattice in ${\mathcal C}_d$}
to be an algebra with 2-element universe, say $\{0,1\}$
and operation defined by 
\begin{eqnarray}\label{2semi}
w(x_1,\ldots,x_d) = 
\begin{cases}
1& \mbox{\rm if}\; x_1=x_2=\cdots=x_d=1;\\
0& \mbox{\rm else.}
\end{cases}
\end{eqnarray}
More generally, a $2$-element semilattice for a
variety $\mathcal V$ is one in which, for every $d$,
each $d$-ary fundamental
operation satisfies (\ref{2semi}).

\begin{thm}\label{pure_cyclic}
A subvariety of the pure $d$-ary cyclic term variety
either has a $d$-cube term or contains a
$2$-element semilattice.
A finite algebra in the pure $d$-ary cyclic term variety
either has a $d$-cube term or has a
$2$-element semilattice
section.
\end{thm}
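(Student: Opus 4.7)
The plan is to prove the variety statement first and obtain the finite algebra statement as a consequence. Suppose $\mathcal V\subseteq\mathcal C_d$ has no $d$-cube term. The signature $\tau$ of $\mathcal C_d$ consists of the single $d$-ary symbol $w$, so $\lVert\tau\rVert=d$, and Theorem~\ref{main4} yields that $\mathcal V$ has no cube term at all. Theorem~\ref{main3} then produces a cube term blocker $(U,F)$ for $\m f=\m f_{\mathcal V}(x,y)$; the construction of $U$ through Theorem~\ref{thm_cross-seqs} and Theorem~\ref{main2} ensures $x\in U$ and $y\notin U$. By Lemma~\ref{lm-basics}(5), $w$ is $U$-absorbing in at least one variable.

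The first step is to upgrade this to absorption in every variable by cyclicity. If $w$ is $U$-absorbing in position $k$ and $j$ is any other position, then the cyclic identity $w(x_1,\ldots,x_d)=w(x_2,\ldots,x_d,x_1)$ iterates to relocate a $U$-valued input at $j$ into position $k$ without changing the value of $w$, so $w$ is $U$-absorbing in variable $j$ as well.

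The main step is a structural induction showing $V:=F\setminus U=\{y\}$. I claim that for every $\mathcal V$-term $t(x,y)$, if $t^{\m f}(x,y)\notin U$ then $t$ contains no syntactic occurrence of $x$, whence $t^{\m f}(x,y)=y$ by idempotency. The base cases $t\in\{x,y\}$ are immediate. For $t=w(s_1,\ldots,s_d)$, any subterm with $s_i^{\m f}\in U$ would push $t^{\m f}$ into $U$ via absorption in variable $i$; hence each $s_i^{\m f}$ lies outside $U$, the inductive hypothesis gives that each $s_i$ uses only $y$, and $t^{\m f}=w(y,\ldots,y)=y$ by idempotency. With $V=\{y\}$ in hand, the partition $\{U,\{y\}\}$ is a congruence of $\m f$: any tuple with an entry in $U$ is sent to $U$ by absorption, while $w(y,\ldots,y)=y$. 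Relabelling $U\mapsto 0$ and $\{y\}\mapsto 1$, the two-element quotient satisfies $w(a_1,\ldots,a_d)=1$ exactly when all $a_i=1$, which is the $2$-element semilattice of~\eqref{2semi}, and thus $\mathcal V$ contains it.

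For the finite statement, let $\m a\in\mathcal C_d$ be a finite algebra with no $d$-cube term; then $\mathcal V(\m a)$ also has no $d$-cube term, so by what has just been proved there is a $2$-element semilattice $\m s\in\mathcal V(\m a)$. Since $\m s$ is subdirectly irreducible and every ultrapower of the finite algebra $\m a$ is isomorphic to $\m a$, the standard fact that subdirectly irreducibles in $\mathcal V(\m a)$ are homomorphic images of subalgebras of ultrapowers of $\m a$ specializes to place $\m s$ as a homomorphic image of a subalgebra of $\m a$, that is, a section of $\m a$. The principal obstacle in the whole argument is the structural induction used to collapse $V$ to $\{y\}$; once this is in place, everything else follows quickly from the machinery already assembled in the paper together with the cyclic symmetry of $w$.
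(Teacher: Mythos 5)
Your proof of the first statement is correct and follows essentially the paper's route: obtain a cube term blocker $(U,F)$ of $\m f$ (the paper goes through Theorem~\ref{main1}(1) and Claim~\ref{clm-main4}, you go through Theorem~\ref{main4} and Theorem~\ref{main3}; these are equivalent), use cyclicity to upgrade $U$-absorption from one variable to all, and pass to a two-element quotient. Your structural induction showing $F\setminus U=\{y\}$ is valid, but note that it depends on $x\in U$ and $y\notin U$, which the statement of Theorem~\ref{main3} does not assert; you are right that it follows from tracing Theorems~\ref{thm_cross-seqs} and \ref{main2}. The paper sidesteps this dependence (and the induction) entirely: for an arbitrary blocker $(U,F)$ with $w$ absorbing in all variables, $U\times U$ together with the equality relation is a congruence and $U\cup\{t\}$ is a subuniverse for any $t\notin U$, so the semilattice arises as a quotient of a subalgebra regardless of where $x$ and $y$ lie.

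The second statement is where your argument has a genuine gap. The ``standard fact'' you invoke --- that every subdirectly irreducible member of $\mathcal V(\m a)$ is a homomorphic image of a subalgebra of an ultrapower of $\m a$ --- is J\'onsson's Lemma, which requires congruence distributivity and is false for arbitrary (even finite, idempotent) algebras; nothing in this setting supplies distributivity, and there exist finite algebras whose generated varieties contain subdirectly irreducibles, even arbitrarily large ones, far outside $\Hom\Sub(\m a)$. Moreover, what you would need here is exactly the nontrivial content being skipped: your $\m s$ is constructed as a quotient of $\m f_{\mathcal V(\m a)}(x,y)$, which embeds in a finite power of $\m a$, so a priori $\m s$ is only a section of a finite power of $\m a$, and pushing it down to a section of $\m a$ itself is the whole point. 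The paper does this with Lemma~\ref{hspfin} and Corollary~\ref{finite-case}: since $\m a$ has no $d$-cube term, Theorem~\ref{main4} gives that $\mathcal V(\m a)$ has no cube term at all, and Corollary~\ref{finite-case} then produces a cube term blocker $(U,B)$ on $\m a$ itself; by Lemma~\ref{lm-basics}(5) and cyclicity, $w$ restricted to $\m b$ is $U$-absorbing in every variable, and repeating your congruence/subuniverse construction inside $\m b$ yields the two-element semilattice as a section of $\m a$. Your proof of the finite case should be repaired along these lines.
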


One should note that a $2$-element semilattice  has no
$d$-cube term for any $d$, so the
two cases described in the theorem
are complementary.

\begin{proof}
  Let $\mathcal V$ be a subvariety of the pure $d$-ary cyclic term variety.
  If $\mathcal V$ does not have a $d$-cube
  term, then by Theorem~\ref{main1}~(1) the free algebra
  $\m f = \m f_{\mathcal V}(x,y)$ has a compatible
  $d$-ary cross. But the signature $\tau$ of $\mathcal V$ satisfies 
  $d = \|\tau\|$, so Claim~\ref{clm-main4} shows that
  $\m f$ has a cube term blocker of the form $(U,F)$.
  It follows from Lemma~\ref{lm-basics}~(5) that
  the cyclic term of the variety must be $U$-absorbing
  in at least one of its variables, so by cyclicity
  this term is $U$-absorbing in all of its variables.
  This implies that
    (i)~the congruence $\theta$ on $\m f$ generated by $U\times U$
    is the union of $U\times U$ and the equality relation,
    and (ii)~if 
  $t\in F\setminus U$ is chosen arbitrarily, then
  $S = U\cup \{t\}$ is a subuniverse of $\m f$.
  The algebra $\m s/\theta|_S$ must then be a $2$-element
  semilattice in $\mathcal V$.

  For the second statement,
  assume that $\m a$ is a finite algebra in the pure $d$-ary
  cyclic term variety and that $\m a$ does not have a $d$-cube term.
  Then ${\mathcal V}(\m a)$ cannot have a
$d$-cube term, so by Theorem~\ref{main4}, 
${\mathcal V}(\m a)$ cannot have a cube term at all.
Therefore Corollary~\ref{finite-case} guarantees that $\m a$ 
has a cube term blocker, say $(U,B)$. Now construct a $2$-element
  semilattice from this blocker in the same manner
  one was constructed from the blocker $(U,F)$ in the preceding
  paragraph. It will be a section of $\m a$.
\end{proof}

What matters to us in Theorem~\ref{pure_cyclic}
is that the $d$ in ``$d$-ary cyclic term''
agrees with the $d$ in 
``$d$-cube term''; that is, the theorem
establishes the existence of a cube
term under some condition, \emph{and bounds its index}.
If one is not concerned with such a bound, then
one can establish a result about varieties with many
cyclic fundamental operations, namely:

\begin{thm}
  Let $\mathcal V$ be an
  idempotent variety whose fundamental operations
  each satisfy cyclic identities. That is, for each
  fundamental operation $w(x_1,\ldots,x_n)$ it is the case that
${\mathcal V}\models w(x_1,x_2,\ldots,x_n)=w(x_2,x_3,\ldots,x_1).$
  Then $\mathcal V$ either has a
  cube term or contains a 2-element semilattice.
\end{thm}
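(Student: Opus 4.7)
The plan is to imitate the proof of Theorem~\ref{pure_cyclic} almost verbatim, with the sole new ingredient being the observation that cyclicity of a single fundamental operation (regardless of its arity) allows us to promote $U$-absorption in one variable to $U$-absorption in every variable. Since the hypothesis here is exactly that each fundamental operation is cyclic in its own arity, this promotion works operation by operation. We give up the dimension bound from Theorem~\ref{pure_cyclic}, so we use Theorem~\ref{main3} rather than Theorem~\ref{main1}(1) and Claim~\ref{clm-main4}.

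First I would assume that $\mathcal V$ has no cube term and invoke Theorem~\ref{main3} to obtain a nonempty proper subuniverse $U$ of $\m f=\m f_{\mathcal V}(x,y)$ such that $(U,F)$ is a cube term blocker, i.e., $U$ is the base of compatible symmetric $d$-ary crosses of $\m f$ for every $d$. By Lemma~\ref{lm-basics}(5), each fundamental operation $w$ of $\m f$ is $U$-absorbing in at least one of its variables. The cyclic identity $w(x_1,\ldots,x_n)=w(x_2,\ldots,x_n,x_1)$, applied iteratively, permutes the variables of $w$ cyclically, so once $w$ is $U$-absorbing in some variable it is $U$-absorbing in every variable. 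Hence every fundamental operation of $\m f$ is $U$-absorbing in \emph{all} of its variables.

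Next I would pick any $t\in F\setminus U$ and form $S=U\cup\{t\}$. Since every fundamental operation is $U$-absorbing in all variables, any evaluation on arguments from $S$ with at least one argument in $U$ lands in $U\subseteq S$, while idempotence handles the all-$t$ case; thus $S$ is a subuniverse of $\m f$. I would then show that the relation $\theta:=(U\times U)\cup\id_{\m f}$ is a congruence of $\m f$: given any fundamental operation $w$ and $\theta$-related tuples $(a_i,b_i)$, the pairs split into diagonal pairs and pairs in $U\times U$; if all pairs are diagonal then $w(\bar a)=w(\bar b)$, and otherwise total $U$-absorption guarantees $w(\bar a),w(\bar b)\in U$, so $(w(\bar a),w(\bar b))\in\theta$.

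Finally I would look at the two-element quotient $\m S/\theta|_S$ with classes $\bar u$ (the image of $U$) and $\bar t$ (the class $\{t\}$). By idempotence each fundamental operation sends $(\bar u,\ldots,\bar u)\mapsto \bar u$ and $(\bar t,\ldots,\bar t)\mapsto \bar t$, while total $U$-absorption forces any mixed tuple to be sent to $\bar u$. This is precisely the $2$-element semilattice behavior of equation~\eqref{2semi} (with $\bar t=1$, $\bar u=0$), so $\m S/\theta|_S$ is a $2$-element semilattice in $\mathcal V$. I do not anticipate a genuine obstacle: the only nontrivial point is the cyclic promotion of absorption, and everything after that is the same congruence-quotient construction used in Theorem~\ref{pure_cyclic}, with cyclicity applied to each fundamental operation separately in place of the single cyclic term available there.
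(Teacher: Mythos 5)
Your proposal is correct and follows essentially the same route as the paper: the paper's proof also invokes Theorem~\ref{main3} to get a cube term blocker $(U,F)$ for $\m f_{\mathcal V}(x,y)$, applies Lemma~\ref{lm-basics}(5) and cyclicity to upgrade $U$-absorption in one variable to absorption in all variables of each fundamental operation, and then forms the same congruence $\theta=(U\times U)\cup\id$ and subuniverse $S=U\cup\{t\}$ to produce the $2$-element semilattice. Your write-up merely spells out the routine verifications (that $S$ is a subuniverse, that $\theta$ is a congruence, and the semilattice behavior of the quotient) that the paper leaves implicit by referring back to the proof of Theorem~\ref{pure_cyclic}.
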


\begin{proof}
  Let $\m f = \m f_{\mathcal V}(x,y)$.
  If $\mathcal V$ has no cube term, then by
  Theorem~\ref{main3} there is a cube term
  blocker $(U,F)$ for $\m f$. By repeating the
  argument in the first paragraph of the proof
  of Theorem~\ref{pure_cyclic} we find
  that $\mathcal V$ contains a $2$-element semilattice.
    \end{proof}

\section{Generic crosses}\label{generic-section}

In this section we focus on idempotent varieties of finite type.
We show that if a nontrivial member of such
a variety
has a compatible $d$-ary cross, then some countably infinite
algebra $\m a$ in the variety
has a `generic' compatible $d$-ary cross.
By a `generic cross' we mean a cross
$\cross(U_1,\ldots,U_n)$ where the
sets $U_1,\ldots,U_n$ are as independent as possible.
Specifically, when $\cross(U_1,\ldots,U_n)$
is a cross on a countably infinite set $A$, 
we call $\cross(U_1,\ldots,U_n)$ 
\emph{generic}
if every nonzero Boolean combination of the 
sets $U_1,\ldots,U_n$ is countably infinite.

\begin{thm}\label{prime_thm}
  If $\mathcal X$ is an idempotent variety of finite
  type and some member of $\mathcal X$ has a compatible
  $d$-ary cross, then some countably infinite member
  of $\mathcal X$ has a compatible $d$-ary generic cross.
\end{thm}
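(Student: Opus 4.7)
The plan is to amplify the given compatible cross on $\m b$ to the larger algebra $\m b^\omega$ by pullback along coordinate projections, and then cut down to a countable subalgebra that is rich enough in each Boolean atom. Let $\m b \in \mathcal X$ carry a compatible cross $\cross(V_1,\ldots,V_d)$; by Lemma~\ref{lm-basics}(2) each $V_i$ is a nonempty proper subuniverse of $\m b$, so $B \setminus V_i$ is also nonempty. In $\m c := \m b^\omega$, set $U_i := \pi_{i-1}^{-1}(V_i)$, where $\pi_j\colon \m c \to \m b$ is the $j$-th coordinate projection. Each $U_i$ is a subuniverse of $\m c$ because $V_i$ is a subuniverse of $\m b$ and the operations of $\m c$ act coordinatewise.

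Next I would verify that $\cross(U_1,\ldots,U_d)$ is a compatible cross of $\m c$. Given an $n$-ary term $t$ and $\wec{e}_1,\ldots,\wec{e}_n \in \cross(U_1,\ldots,U_d)$, form the diagonal columns $\wec{w}_k \in B^d$ whose $i$-th entry is the $(i-1)$-st coordinate of the $i$-th component of $\wec{e}_k$. The hypothesis $\wec{e}_k \in \cross(U_1,\ldots,U_d)$ translates directly into $\wec{w}_k \in \cross(V_1,\ldots,V_d)$, so by compatibility of the original cross on $\m b$ we get $t(\wec{w}_1,\ldots,\wec{w}_n) \in \cross(V_1,\ldots,V_d)$, and unraveling via the coordinatewise action of $t$ yields $t(\wec{e}_1,\ldots,\wec{e}_n) \in \cross(U_1,\ldots,U_d)$.

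Each Boolean atom $W_S := \bigcap_{i \in S} U_i \cap \bigcap_{i \notin S}(C \setminus U_i)$ of the $U_i$'s is visibly nonempty in $\m c$: for $i \in S$ choose $v_i \in V_i$ and for $i \notin S$ choose $v_i \in B \setminus V_i$, then any $f \in \m c$ with $f(i-1) = v_i$ belongs to $W_S$. To force a countably generated subalgebra to hit each atom infinitely often, I would fix distinct $b_0, b_1 \in B$ (available because any $V_i$ is proper and nonempty) and, for each $S \subseteq \{1,\ldots,d\}$ and each $n < \omega$, select $f_{S,n} \in W_S$ which agrees with a fixed witness $f_S \in W_S$ on coordinates $0,\ldots,d-1$, takes value $b_0$ on all further coordinates except at coordinate $d+n$, where it takes value $b_1$. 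These elements are pairwise distinct. Let $\m a$ be the subalgebra of $\m c$ generated by $\{f_{S,n} : S \subseteq \{1,\ldots,d\},\, n < \omega\}$.

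Since $\mathcal X$ has finite signature, $\m a$ is at most countable, and because the infinitely many $f_{S,n}$ are already distinct elements of $\m a$, it is in fact countably infinite. The sets $U_i' := U_i \cap A$ are subuniverses of $\m a$, and $\cross(U_1',\ldots,U_d') = \cross(U_1,\ldots,U_d) \cap A^d$ is compatible with $\m a$ as the intersection of a compatible relation of $\m c$ with $A^d$. Its Boolean atoms are $W_S \cap A \supseteq \{f_{S,n} : n < \omega\}$, each countably infinite, so the cross is generic. The chief point requiring care is the explicit witness choice above, where all $2^d$ atoms must be hit infinitely often inside a countably generated subalgebra; the finite-type hypothesis is used only to ensure $|A| \leq \aleph_0$.
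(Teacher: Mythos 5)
Your proof is correct, but it follows a genuinely different route from the paper's. The paper first transfers the hypothesis to the two-generated free algebra: from the existence of a compatible $d$-ary cross it deduces (via Corollary~\ref{comp_crosses_of_cube_terms} and Theorems~\ref{main1}/\ref{main3}) that $\m f=\m f_{\mathcal X}(x,y)$ itself carries a compatible $d$-ary cross, and then forms $\m f^d\times\m a$ with $\m a$ an arbitrary countably infinite member of $\mathcal X$; the cross is pulled back along a ``diagonal'' projection $(\m f^d\times\m a)^d\to\m f^d$, countability comes for free because $\m f$ is countable in finite type, and genericity comes from the free-floating $\m a$-coordinate. You instead stay with the given algebra $\m b$ (of arbitrary cardinality), pull the cross back along the analogous diagonal projection $(\m b^\omega)^d\to\m b^d$ --- your explicit computation with the columns $\wec{w}_k$ is exactly this preimage argument unwound --- and then recover countability by a L\"owenheim--Skolem-style cut-down to a subalgebra generated by countably many witnesses chosen to hit all $2^d$ Boolean atoms infinitely often, using finiteness of the signature only to bound the size of a countably generated subalgebra. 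Both arguments are pullbacks of the cross along a coordinate projection; what yours buys is independence from the free-algebra machinery of Sections~2--3 (and, in fact, from idempotence, since your appeal to Lemma~\ref{lm-basics}(2) is inessential: the $U_i$ need only be nonempty proper subsets, with compatibility of the union being the real content), at the cost of the extra bookkeeping in the cut-down step --- the choice of the elements $f_{S,n}$, their pairwise distinctness, and the identity $\cross(U_1',\ldots,U_d')=\cross(U_1,\ldots,U_d)\cap A^d$ --- all of which you handle correctly. The paper's route is shorter given its earlier results and pinpoints the canonical countable witness $\m f^d\times\m a$, while yours is more self-contained and adapts readily to larger signatures by adjusting the cardinal in the cut-down.
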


\begin{proof}
  If some member
  of $\mathcal X$ has a compatible
  $d$-ary cross, then
  by Corollary~\ref{comp_crosses_of_cube_terms},
  $\mathcal X$ cannot have a $d$-cube term.
  Hence Theorem~\ref{main3} implies that
the
algebra $\m f = \m f_{\mathcal X}(x,y)$ must
have a compatible
$d$-ary cross, say
  \[
  \cross(U_1,\ldots,U_d) = B_1\cup\cdots\cup B_d
  \]
  where $B_i = F\times \cdots\times F\times U_i\times F\times\cdots\times F$
  is full in all coordinates except the $i$th.
  Here $\m f$ need not be
  infinite, and this cross need not be generic, so we modify
  the situation as follows.

  Let $\m a\in\mathcal X$ be a countably infinite algebra.
  Now define
  \begin{align*}
\mathcal F &{}= \m f^d\times \m a,\\
{\mathcal U}_1 &{}= B_1\times A,\\
&\,\,\,\vdots\\
{\mathcal U}_d&{}= B_d\times A.
    \end{align*}
  It is easy to see that
  $\mathcal F$ is countably
  infinite and that each
  ${\mathcal U}_i$ is a
  nonempty
  proper subuniverse of
  $\mathcal F$.

  \begin{clm}\label{clm-generic}
    $\cross({\mathcal U}_1,\ldots,{\mathcal U}_d)$
    is a compatible generic $d$-ary cross
    of $\mathcal F$.
    \end{clm}

  \begin{proof}[Proof of Claim~\ref{clm-generic}]    
  We first argue that $\cross({\mathcal U}_1,\ldots,{\mathcal U}_d)$
    is compatible, i.e.\ a subuniverse of $\mathcal F^d$.
    For this we consider ${\mathcal F} = \m f^d\times \m a$
    to have $d+1$ coordinates, so
    \[
      {\mathcal F}^d = \m f^d\times \m a \times
      \m f^d\times \m a \times\cdots\times \m f^d\times \m a
      \]      
    has $d(d+1)$ coordinates. Notice that all coordinate
    algebras in this direct representation of ${\mathcal F}^d$
    are $\m f$ except those whose coordinates lie in the arithmetical
    progression $d+1, 2(d+1), \cdots, d(d+1)$,
    in which case the coordinate algebras
    are $\m a$.

    There is a projection homomorphism
    $\pi\colon {\mathcal F}^d\to \m f^d$
    which projects onto the coordinates in the arithmetic progression
    $1, (d+1)+2, 2(d+1)+3, \cdots, (d-1)(d+1)+d$, which projects
    onto the first coordinate of the first block of $d+1$
    factors of ${\mathcal F}^d$, the second factor of the second
    block of $d+1$ factors, etc. It is not hard to verify that 
    \[
    \cross({\mathcal U}_1,\ldots,{\mathcal U}_d)
    = \pi^{-1}\bigl(\cross(U_1,\ldots,U_d)\bigr),
    \]
thereby establishing that $\cross({\mathcal U}_1,\ldots,{\mathcal U}_d)$
 is compatible.

  To show that
  $\cross({\mathcal U}_1,\ldots,{\mathcal U}_d)$
  is generic, it suffices to show
  that any intersection
  $
  {\mathcal U}_1^{\varepsilon_1}\cap \cdots \cap {\mathcal U}_d^{\varepsilon_d}
  $
  contains infinitely many elements,
  where $\varepsilon_i=\pm 1$ for each $i$
  and ${\mathcal U}_1^{+1}={\mathcal U}_1$ while
  ${\mathcal U}_1^{-1}={\mathcal F}\setminus {\mathcal U}_1$.
  Observe that a $(d+1)$-tuple $(u_1,\ldots,u_d,a)$
  belongs to the set
  ${\mathcal U}_1^{\varepsilon_1}\cap \cdots \cap {\mathcal U}_d^{\varepsilon_d}$
  exactly when 
  $u_i\in {\mathcal U}_i$ if $\varepsilon_i=+1$, 
  $u_i\in {\mathcal F}\setminus {\mathcal U}_i$ if $\varepsilon_i=-1$,
  and $a\in A$. Such choices are possible
  since ${\mathcal U}_i$ is a
  nonempty proper subuniverse of
$\mathcal F$ for each $i$ and $A$ is nonempty.
  If we fix the $u_i$'s and let the last coordinate $a$
range over the infinite set $A$ we obtain infinitely
many elements in 
  ${\mathcal U}_1^{\varepsilon_1}\cap \cdots \cap {\mathcal U}_d^{\varepsilon_d}$.
\renewcommand{\qedsymbol}{$\diamond$} 
  \end{proof}
  This completes the proof of Theorem~\ref{prime_thm}.
\end{proof}

\begin{cor}\label{prime_cor}
  The class of varieties having a $d$-cube term
  represents a join-prime filter in the lattice
  of idempotent Maltsev conditions.
\end{cor}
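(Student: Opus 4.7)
The plan is to prove the contrapositive of join-primality. Let $\mathcal{V}_1,\mathcal{V}_2$ be idempotent varieties, neither having a $d$-cube term, and write $\mathcal{V}=\mathcal{V}_1\vee\mathcal{V}_2$ for their join in the lattice of idempotent Maltsev conditions. Concretely, $\mathcal{V}$ is realized by the idempotent variety whose signature is the disjoint union of the signatures of $\mathcal{V}_1$ and $\mathcal{V}_2$ and whose axioms are the union of the two theories; so an algebra in $\mathcal{V}$ is just a set carrying a $\mathcal{V}_1$-structure together with an independent $\mathcal{V}_2$-structure. Since every idempotent Maltsev condition is a disjunction of strong ones, I may assume both $\mathcal{V}_i$ are of finite signature, so that Theorem~\ref{prime_thm} applies. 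By Corollary~\ref{comp_crosses_of_cube_terms}, it then suffices to exhibit an algebra in $\mathcal{V}$ admitting a compatible $d$-ary cross.

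By Theorem~\ref{prime_thm}, each $\mathcal{V}_i$ contains a countably infinite algebra $\m{a}_i$ on a universe $A_i$ carrying a compatible generic $d$-ary cross $\cross(U_1^{(i)},\dots,U_d^{(i)})$. Genericity forces every nonempty Boolean atom of the algebra generated on $A_i$ by $U_1^{(i)},\dots,U_d^{(i)}$ to be countably infinite; moreover, the construction used in the proof of Theorem~\ref{prime_thm} (taking $\mathcal F=\m f^d\times\m a$ with the $\mathcal U_i$ built from proper nonempty $U_i\subsetneq F$ blown up by an infinite factor) visibly produces all $2^d$ atoms as nonempty. Hence the structures $(A_1;U_1^{(1)},\dots,U_d^{(1)})$ and $(A_2;U_1^{(2)},\dots,U_d^{(2)})$ are isomorphic as countable sets equipped with $d$ distinguished subsets, and I can fix a bijection $\varphi\colon A_1\to A_2$ with $\varphi(U_j^{(1)})=U_j^{(2)}$ for every $j$.

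Transport the $\mathcal{V}_2$-operations of $\m{a}_2$ to $A_1$ via $\varphi$; because $\varphi$ matches the two crosses, the cross $\cross(U_1^{(1)},\dots,U_d^{(1)})$ remains compatible with this transported $\mathcal{V}_2$-structure. The algebra $\m{a}$ on $A_1$ equipped with both the original $\mathcal{V}_1$-operations and the transported $\mathcal{V}_2$-operations lies in $\mathcal{V}$, and every basic operation of $\m{a}$ preserves $\cross(U_1^{(1)},\dots,U_d^{(1)})$. By Corollary~\ref{comp_crosses_of_cube_terms}, $\m{a}$ has no $d$-cube term, and consequently neither does $\mathcal{V}$; this establishes the contrapositive.

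The principal obstacle is the alignment of the two crosses by a single bijection. Without genericity, the Boolean cells cut out on the two sides by the $U_j^{(i)}$ could have entirely different cardinality patterns, preventing any such identification. Theorem~\ref{prime_thm} is designed precisely to eliminate this obstruction: the canonical shape of a generic cross on a countable set allows the two $\m{a}_i$'s to be glued along a common universe and a common compatible cross, producing the desired witness in $\mathcal{V}$.
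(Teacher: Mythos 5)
Your proposal is correct and follows essentially the same route as the paper's own proof: reduce to finite signature, apply Theorem~\ref{prime_thm} to obtain countably infinite algebras with generic compatible $d$-ary crosses, use genericity to fix a bijection matching the two crosses, transport one structure along it, and merge to get an algebra of the coproduct with a compatible cross, contradicting (via Corollary~\ref{comp_crosses_of_cube_terms}) the existence of a $d$-cube term. The only differences are cosmetic: you argue the contrapositive rather than by contradiction, and your finite-signature reduction is phrased a bit loosely (``every idempotent Maltsev condition is a disjunction of strong ones'') where the paper passes to finitely presentable varieties interpretable in the given ones, but the underlying finiteness argument is the same.
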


\begin{proof}
  If not, then there are idempotent varieties
  $\mathcal X$ and $\mathcal Y$ that do not
  have a $d$-cube term, but their coproduct
  $\mathcal X\sqcup \mathcal Y$
  does have a $d$-cube term. But if
  $\mathcal X\sqcup \mathcal Y$ has a $d$-cube term,
  then so must $\mathcal X'\sqcup \mathcal Y'$
  for some finitely presentable varieties
  $\mathcal X'$ interpretable in $\mathcal X$ and 
  $\mathcal Y'$ interpretable in $\mathcal Y$.
  Replacing $\mathcal X$ and $\mathcal Y$
  by $\mathcal X'$ and $\mathcal Y'$ we may assume that
  $\mathcal X$ and $\mathcal Y$ are finitely presentable,
  in particular of finite type.

  We prove the corollary by arguing
  that if $\mathcal X$ and $\mathcal Y$ have finite
  type and no $d$-cube term, then $\mathcal X\sqcup\mathcal Y$
  has no $d$-cube term.

  By Theorem~\ref{prime_thm} there exist countably infinite algebras
$\m a\in \mathcal X$ and
  $\m b\in \mathcal Y$ which have generic 
  compatible $d$-ary crosses, say $\cross(U_1,\ldots,U_d)$
  and $\cross(V_1,\ldots,V_d)$. By genericity, it is possible to
  find a bijection $\alpha\colon A\to B$ such that
  $\alpha(U_i)=V_i$ for all $i$.
  There is a unique $\mathcal X$-structure $\m b'$ on $B$
  that makes $\alpha\colon \m a\to \m b'$ an isomorphism.
Thus
  $\cross(V_1,\ldots,V_d) = \alpha(\cross(U_1,\ldots,U_d))$
  is a compatible cross of $\m b'$. Since it is also a compatible
  cross of $\m b\in \mathcal Y$, the algebra
  on $B$ obtained by merging $\m b$ and $\m b'$
  is a model of the identities of $\mathcal X\sqcup \mathcal Y$
  which has a compatible $d$-ary cross. This is enough to show that
  $\mathcal X\sqcup\mathcal Y$ has no $d$-cube term.
  \end{proof}

\begin{remark}
  The results in this
  section were discovered during the 2016
  `Algebra and Algorithms' workshop after hearing
  a talk by Matthew Moore on the join-primeness
  among \emph{idempotent linear} Maltsev conditions
    of the condition expressing
the existence of a cube term.
Later, Jakub Opr\v{s}al pointed us to his preprint
\cite{oprsal} where he proves our
Corollary~\ref{prime_cor} (among other things).
Opr\v{s}al told us that he learned of our
characterization of cube terms in terms of crosses
from a talk of Szendrei at the AAA90 conference
in Novi Sad in 2015, and then developed a similar
characterization of his own which allowed
him to prove
Corollary~\ref{prime_cor}. His discovery
of Corollary~\ref{prime_cor} predates ours.
His argument depends on an analogue of Theorem~\ref{prime_thm},
which he proves for varieties in arbitrary languages.
Our proof also works for arbitrary languages,
but we decided not to change ours after learning
about Opr\v{s}al's work.
  \end{remark}

\bibliographystyle{plain}

\end{document}